\numberwithin{equation}{section}
\newtheorem{theorem}{Theorem}[section]
\newtheorem{corollary}[theorem]{Corollary}
\newtheorem{lemma}[theorem]{Lemma}
\newtheorem{proposition}[theorem]{Proposition}
\theoremstyle{definition}
\theoremstyle{remark}
\newtheorem{remark}[theorem]{Remark}
\newcommand{\half}{\frac{1}{2}}
\newcommand{\CC} {{\mathbb C}}          % complex numbers
\newcommand{\RR} {{\mathbb R}}		% real numbers
\newcommand{\ZZ} {{\mathbb Z}}		% integers
\newcommand{\QQ} {{\mathbb Q}}		% rationals
\newcommand{\PP}{\mathbb{P}}
\newcommand{\HH}{\mathbb{H}}
\newcommand{\LL}{\mathbb{L}}
\newcommand{\X}{\mathfrak{X}}
\newcommand{\bir}{\mathsf{bir}}
\newcommand{\reg}{\mathsf{reg}}
\newcommand{\Ell}{\mathsf{Ell}}
\newcommand{\Ellqy}{\operatorname{Ell}_{q,y}}
\newcommand{\chibar}{\overline{\chi}}
\newcommand{\mdata}{\mathfrak{m}}
\renewcommand{\O}{\mathcal{O}}
\renewcommand{\top}{\,\mathsf{t}}
\newcommand{\varp}{\mathsf{p}}
\newcommand{\varq}{\mathsf{q}}
\newcommand{\varr}{\mathsf{r}}
\newcommand{\vara}{\mathsf{a}}
\newcommand{\Zcal}{\mathcal{Z}}
\newcommand{\mvec}{\bm{m}}
\newcommand{\zetavec}{\bm{\zeta }}
\newcommand{\rhovec}{\bm{\rho }}
\newcommand{\deltavec}{\bm{\delta  }}
\newcommand{\muvec}{\bm{\mu  }}
\newcommand{\dvec}{\bm{d }}
\newcommand{\uvec}{\bm{u }}
\newcommand{\vvec}{\bm{v }}
\newcommand{\wvec}{\bm{w }}
\newcommand{\Sym}{\operatorname{Sym}}
\newcommand{\Hilb}{\operatorname{Hilb}}
\newcommand{\SU}{\operatorname{SU}}
\newcommand{\SO}{\operatorname{SO}}
\title{$G$-fixed Hilbert schemes on $K3$ surfaces, modular forms, and
eta products}
\author{Jim~Bryan}
\address{
Department of Mathematics, University of British Columbia, Room 121, 1984 Mathematics Road, Vancouver, B.C., Canada V6T 1Z2
}
\email{jbryan@math.ubc.ca}
\author{\'{A}d\'{a}m~Gyenge}
\address{Mathematical Institute, University of Oxford, Andrew Wiles Building, Woodstock Road, Oxford, OX2 6GG, United Kingdom
}
\email{Adam.Gyenge@maths.ox.ac.uk}
\begin{document}

%%%%%%%%%%%%%%%%%%%%%%%%%%%%%%%
% Title page
%%%%%%%%%%%%%%%%%%%%%%%%%%%%%%%

%\removeabove{}
%\removebetween{}
%\removebelow{}

\maketitle

\begin{prelims}

\DisplayAbstractInEnglish

\bigskip

\DisplayKeyWords

\medskip

\DisplayMSCclass

%\bigskip

%\languagesection{Fran\c{c}ais}

%\bigskip

%\DisplayTitleInFrench

%\medskip

%\DisplayAbstractInFrench

\end{prelims}

%%%%%%%%%%%%%%%%%%%%%
% Table of Contents
%%%%%%%%%%%%%%%%%%%%%

\newpage

\setcounter{tocdepth}{1}

\tableofcontents

%%%%%%%%%%%%%%%%%%%%%
% Content begins here
%%%%%%%%%%%%%%%%%%%%%

\section{Introduction}

Let $X$ be a complex $K3$ surface with an effective action of a group
$G$ which preserves the holomorphic symplectic form. Mukai showed that
such $G$ are precisely the subgroups of the Mathieu group
$M_{23}\subset M_{24}$ such that the induced action on the set
$\{1,\dots ,24 \}$ has at least five orbits
\cite{mukai1988finite}. Xiao classified all possible actions into
82 possible topological types of the quotient $X/G$ \cite{xiao1996galois}.

The \emph{$G$-fixed Hilbert scheme}\footnote{Some authors call this
the $G$-equivariant Hilbert scheme or the $G$-invariant Hilbert
scheme.} of $X$ parameterizes $G$-invariant length $n$ subschemes
$Z\subset X$. It can be identified with the $G$-fixed point locus in
the Hilbert scheme of points:
\[
\Hilb^{n}(X)^{G} \subset \Hilb^{n}(X).
\]

We define the corresponding \emph{$G$-fixed partition function} of
$X$ by
\[
Z_{X,G}(q) = \sum_{n=0}^{\infty} e\left(\Hilb^{n}(X)^{G} \right) q^{n-1} 
\]
where $e(-)$ is the topological Euler characteristic.

Throughout this paper we set
\[
q=\exp\left(2\pi i \tau  \right)
\]
so that we may regard $Z_{X,G}$ as a function of $\tau \in \HH$ where
$\HH$ is the upper half-plane.

\subsection{The Main Results.}
Our main result is the following:

\begin{theorem}
\label{thm:main} The function $Z_{X,G}(q)^{-1}$ is a modular cusp
form\footnote{By cusp form, we mean that the order of vanishing at
$q=0$ is at least 1. Modular forms of half integral weight transform with
respect to a multiplier system. We refer to \cite{kohler2011eta} for
definitions.} of weight $\half e(X/G)$ for the congruence subgroup
$\Gamma_{0}(|G|)$.
\end{theorem}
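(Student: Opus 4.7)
\emph{Strategy overview.} The plan is to stratify $X$ by $G$-orbit type, express $Z_{X,G}(q)$ as a product of local contributions indexed by conjugacy classes of stabilizer subgroups, identify each local factor as an eta quotient, and then invoke the theory of eta products \cite{kohler2011eta} to deduce modularity for $\Gamma_{0}(|G|)$.

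\emph{Step 1: Stratification.} For each conjugacy class $[H]$ of subgroups of $G$ appearing as a stabilizer, let $X_{[H]} \subset X$ be the locally closed stratum of points whose stabilizer is conjugate to $H$. Since the action is holomorphic symplectic, the tangent representation at a fixed point embeds $H$ into $\mathrm{SL}(2,\CC)$, so $H$ is of ADE type. A $G$-invariant subscheme of $X$ decomposes canonically as a disjoint union over $G$-orbits in its support, and on an orbit of type $[H]$ the piece is determined by an $H$-invariant punctual subscheme of $\CC^{2}$ with the symplectic $H$-action. Combined with the multiplicativity of the Euler characteristic under stratification, this yields a product formula
\[
q \cdot Z_{X,G}(q) \;=\; \prod_{[H]} F_{H}\!\left(q^{[G:H]}\right)^{e(X_{[H]}/G)},
\]
where $F_{H}(t) = \sum_{n\geq 0} e\!\left(\Hilb^{n}(\CC^{2},0)^{H}\right) t^{n}$ is the punctual generating series for $H$-invariant subschemes of $\CC^{2}$.

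\emph{Step 2: Local factors as eta quotients.} For $H$ trivial, the Ellingsrud--Str\o mme decomposition by monomial ideals yields $F_{1}(t) = \prod_{m\geq 1}(1-t^{m})^{-1}$. For nontrivial $H$, the torus-fixed locus of $\Hilb^{n}(\CC^{2})$ is still indexed by partitions, and the $H$-fixed monomial ideals are those whose partition carries a prescribed $H$-character determined by the McKay correspondence. Organizing this count by McKay type expresses $F_{H}(t)$ as a shifted theta series on the root lattice of the simply-laced Dynkin diagram attached to $H$. The eta product identity promised in the abstract then rewrites this theta series as an explicit eta quotient, completing the identification of $F_{H}(t)$.

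\emph{Step 3: Modularity, weight, and cusp condition.} Substituting the local eta quotient expressions into the product formula of Step 1 exhibits $Z_{X,G}(q)^{-1}$ as an eta quotient with levels dividing $|G|$. By the standard criteria in \cite{kohler2011eta}, such a function transforms under $\Gamma_{0}(|G|)$ with an explicit multiplier system and has weight equal to half the total eta exponent. A bookkeeping calculation using the additivity $e(X/G) = \sum_{[H]} e(X_{[H]}/G)$, together with the weight contribution of each $F_{H}$, identifies this weight with $\tfrac{1}{2} e(X/G)$. The cusp form condition reduces, via K\"ohler's local cusp-order formulas, to showing that the leading exponent of $Z_{X,G}(q)^{-1}$ at every cusp of $\Gamma_{0}(|G|)$ is positive; this is a finite check on the eta exponent data that can be performed uniformly across Xiao's 82 cases \cite{xiao1996galois}.

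\emph{Main obstacle.} The decisive step is the eta product identity for the shifted theta functions of simply-laced root lattices underpinning Step 2; this is where the bulk of the technical work lies and is nontrivial already for the non-cyclic (i.e.\ $D_{n}$ and $E_{n}$) stabilizer types. A secondary difficulty is the uniform verification of the cusp form condition at all cusps of $\Gamma_{0}(|G|)$, since a priori the eta exponent data could conspire to produce a holomorphic but non-vanishing value at some cusp.
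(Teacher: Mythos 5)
Your Steps 1 and 3 follow the paper's architecture exactly: the paper also stratifies $\Hilb(X)^{G}$ by orbit type to get $Z_{X,G}(q)=\eta^{-a}(k\tau)\prod_{i}Z_{\Delta_{i}}(k\tau/k_{i})$, and also concludes by feeding the resulting eta quotient into K\"ohler's weight/level formulas. The genuine gap is in Step 2, for non-cyclic stabilizers. First, your proposed derivation of the local factor $F_{H}$ by counting ``$H$-fixed monomial ideals'' fails when $H$ is binary dihedral, tetrahedral, octahedral or icosahedral: such $H$ does not commute with the two-dimensional torus (its centralizer in $GL_{2}$ is just the scalars, whose fixed locus in $\Hilb^{n}(\CC^{2})^{H}$ consists of $H$-invariant graded ideals and is positive-dimensional), so the $H$-fixed locus is not captured by partitions and the count is not a weighted partition count. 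The shifted theta-function expression for $F_{H}$ actually comes from Nakajima's quiver-variety computation of the multigraded series for $[\CC^{2}/G_{\Delta}]$, not from torus localization. Second, and more seriously, you then invoke ``the eta product identity promised in the abstract'' to convert the theta series into an eta quotient — but that identity \emph{is} one of the paper's main new results, and in the paper it is \emph{deduced from} the local eta-product formula rather than used to prove it; taking it as an input makes your argument circular. You correctly flag this as the ``main obstacle,'' but leaving it unproved means the proof is missing precisely its crux. The paper's resolution is a geometric trick: writing $[\CC^{2}/G]=[\X/H]$ with $\X=[\CC^{2}/\{\pm 1\}]$ and $H=G/\{\pm 1\}\subset SO(3)$, the derived McKay equivalence between $\X$ and $Y=\operatorname{Tot}(K_{\PP^{1}})$ (combined with Nakajima's deformation equivalence of quiver varieties) identifies $e(\Hilb^{m_{0},m_{1}}(\X)^{H})$ with $e(\Hilb^{m_{0}-(m_{0}-m_{1})^{2}}(Y)^{H})$; since $Y/H$ has only $A$-type singularities, the already-established cyclic case plus the Jacobi triple product finishes the computation.

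A secondary but real problem is your cusp condition. You propose to verify that the order of $Z_{X,G}(q)^{-1}$ is positive at \emph{every} cusp of $\Gamma_{0}(|G|)$; this is false in general. The paper's Corollary~\ref{cor: vanishing at cusps} shows that in the two cases where $X/G$ has two $E_{6}$ singularities (Xiao numbers 38 and 69) the form has \emph{poles} at the cusps $1/2$ and $1/8$ respectively. The theorem survives only because the paper's footnote defines ``cusp form'' to mean vanishing at $q=0$ (i.e.\ at $i\infty$) only, which is immediate from $Z_{X,G}(q)=q^{-1}+\cdots$ and requires no check across the 82 cases. As stated, your Step 3 would attempt to prove something stronger that is not true.
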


Our theorem specializes in the case where $G$ is the trivial group to
a famous result of G\"ottsche \cite{gottsche1990betti}. The case where
$G$ is a cyclic group was proved in \cite{bryan2018chl}. An analogous
result for the case where $X$ is an Abelian surface acted on
symplectically by a finite group $G$ has been recently proven by
Pietromonaco \cite{Pietromonaco-GHilbA}.

% One can interpret our result as an instance of the Vafa-Witten
% S-duality conjecture for the orbifold $[X/G]$.

We give an explicit formula for $Z_{X,G}(q)$ in terms of the
Dedekind eta function
\[
\eta (\tau ) = q^{\frac{1}{24}}\prod_{n=1}^{\infty} (1-q^{n})
\]
as follows. Let $p_{1},\dots ,p_{r}$ be the singular points of $X/G$
and let $G_{1},\dots ,G_{r}$ be the corresponding stabilizer subgroups
of $G$. The singular points are necessarily of ADE type: they are
locally given by $\CC^{2}/G_{i}$ where $G_{i}\subset \SU(2)$. Finite
subgroups of $\SU(2)$ have an ADE classification and we let
$\Delta_{1},\dots ,\Delta_{r}$ denote the corresponding ADE root
systems.

For any finite subgroup $G_{\Delta}\subset \SU(2)$ with associated root
system $\Delta$ we define the \emph{local $G_{\Delta }$-fixed
partition function} by
\[
Z_{\Delta} (q) = \sum_{n=0}^{\infty}
e\left(\Hilb^{n}(\CC^{2})^{G_{\Delta}} \right) \, q^{n-\frac{1}{24}} .
\]
The main geometric result we prove is the following. 
\begin{theorem}\label{thm: formula for local series (in intro)}
The local partition function for $\Delta$ of type $A_{n}$ is given by 
\[
Z_{A_{n}}(q) = \frac{1}{\eta (\tau )}
\]
and for type $D_{n}$ and $E_{n}$ by
\[
Z_{\Delta}(q) = \frac{\eta^{2}(2\tau )\eta (4E\tau )}{\eta (\tau )\eta
(2E\tau )\eta (2F\tau )\eta (2V\tau )}
\]
where $(E,F,V)$ are given by:
\[
(E,F,V) = \begin{cases}
(n-2,2,n-2),\\
(6,4,4), \\
(12,8,6),  \\
(30,20,12), 
\end{cases}
\text{if}\quad \begin{array}{l}
\Delta =D_{n}\\ \Delta =E_{6}\\ \Delta =E_{7}\\ \Delta =E_{8}
\end{array} 
\]
\end{theorem}

\begin{remark}
For $\Delta$ of type $D_{n}$ or $E_{n}$, the group $H =
G_{\Delta}/\{\pm 1 \}\subset \SO(3)$ is the symmetry group of a
polyhedral decomposition of $S^{2}\cong \PP^{1}$ into isomorphic
regular spherical polygons. Then $E$, $F$, and $V$ are the number of
edges, faces, and vertices of the polyhedron.  The key idea in proving
the above theorem is to show that $\Hilb (\CC^{2})^{G_{\Delta}}$ is
deformation equivalent to $\Hilb (Y)^{H}$ where
$Y=\operatorname{Tot}(K_{\PP^{1}})$ is the minimal resolution of
$\CC^{2}/\{\pm 1 \}$ (see Section \ref{sec: proof of formula for local
series}).
\end{remark}

Using the work of Nakajima, we will also prove in Lemma~\ref{lem: local
series as theta/eta} that
\[
Z_{\Delta}(q) =\frac{\theta_{\Delta}(\tau)}{\eta (k\tau )^{n+1}}
\]
where 
\[
\theta_{\Delta}(\tau ) = \sum_{\mvec \in M_{\Delta}}
q^{\frac{k}{2}\left(\mvec +\frac{1}{k}\zetavec |\mvec
+\frac{1}{k}\zetavec \right)}
\]
is a shifted theta function for $M_{\Delta} $, the root lattice of
$\Delta$. Here $n$ is the rank of the root system, $k=|G_{\Delta}|$,
and $\zetavec$ is dual to the longest root (see Section~\ref{sec:
local partition functions} and Equation~\eqref{eqn: defn of shifted theta
function} for details).

Theorem~\ref{thm: formula for local series (in intro)} then yields an
eta product identity for the theta function $\theta_{\Delta}(\tau )$
reminiscent of the MacDonald identities:
\begin{theorem}\label{thm: eta product for theta function}
The shifted theta function $\theta _{\Delta}(\tau )$ defined above
$($cf. \S~\ref{sec: local partition functions} and Equation~\eqref{eqn: defn of
shifted theta function}$)$ is given by an eta product as follows:
\[
\theta_{A_{n}} (\tau ) = \frac{\eta^{n+1} ((n+1)\tau )}{\eta (\tau )}
\]
for $\Delta$ of type $A_{n}$ and  
\[
\theta_{\Delta}(\tau ) = \frac{\eta^{2}(2\tau )\,\eta ^{n+2}(4E\tau
)}{\eta (\tau )\, \eta (2E\tau )\,\eta (2F\tau )\,\eta (2V\tau )}
\]
for $\Delta$ of type $D_{n}$ or $E_{n}$, where $E,F,V$ are as in Theorem~\ref{thm: formula for local series (in intro)}.
\end{theorem}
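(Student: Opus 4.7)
The plan is to combine the explicit formulas for $Z_{\Delta}(q)$ from Theorem~\ref{thm: formula for local series (in intro)} with the identity $Z_{\Delta}(q) = \theta_{\Delta}(\tau)/\eta(k\tau)^{n+1}$ from Lemma~\ref{lem: local series as theta/eta}. Multiplying the latter by $\eta(k\tau)^{n+1}$ isolates $\theta_{\Delta}(\tau)$, and the claimed eta-product identities then follow by direct substitution. The theorem is, in essence, a corollary of those two preceding results.

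For $\Delta$ of type $A_{n}$, the binary cyclic group has order $k = n+1$, so substitution gives
\[
\theta_{A_{n}}(\tau) = Z_{A_{n}}(q)\,\eta((n+1)\tau)^{n+1} = \frac{\eta^{n+1}((n+1)\tau)}{\eta(\tau)},
\]
which is the first formula of the theorem.

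For the types $D_{n}, E_{6}, E_{7}, E_{8}$ I would first verify the uniform numerical coincidence $k = 4E$. Indeed, for $D_{n}$ the binary dihedral group has order $4(n-2) = 4E$, while for $E_{6}, E_{7}, E_{8}$ the binary tetrahedral, octahedral, and icosahedral groups have orders $24, 48, 120$, matching $4\cdot 6$, $4\cdot 12$, $4\cdot 30$ respectively. Conceptually this reflects the standard identity $|H| = 2E$ for the rotation group $H$ of a regular polyhedron, doubled by the binary cover $G_{\Delta} \to H$. Plugging $k = 4E$ and the $D$-$E$ formula from Theorem~\ref{thm: formula for local series (in intro)} into $\theta_{\Delta}(\tau) = Z_{\Delta}(q)\,\eta(4E\tau)^{n+1}$ yields
\[
\theta_{\Delta}(\tau) = \frac{\eta^{2}(2\tau)\,\eta^{n+2}(4E\tau)}{\eta(\tau)\,\eta(2E\tau)\,\eta(2F\tau)\,\eta(2V\tau)},
\]
exactly as claimed.

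Since the manipulation is purely algebraic, there is no genuine obstacle in this step: all the substance sits in the two inputs. Theorem~\ref{thm: formula for local series (in intro)} is derived through the derived McKay correspondence between $[\CC^{2}/\{\pm 1\}]$ and $\operatorname{Tot}(K_{\PP^{1}})$, and the theta-over-eta description of $Z_{\Delta}$ comes from Nakajima's analysis of Hilbert schemes on ALE spaces. If anything, the only point that warrants care is the bookkeeping needed to identify $k$ uniformly with $4E$ across the $D$-$E$ cases.
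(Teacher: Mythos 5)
Your proposal is correct and follows exactly the paper's route: the authors obtain Theorem~\ref{thm: eta product for theta function} precisely by combining Lemma~\ref{lem: local series as theta/eta} with Theorem~\ref{thm: formula for local series (in intro)}, using $k=n+1$ in type $A$ and $k=|G_{\Delta}|=2|H|=4E$ in types $D$ and $E$. Nothing further is needed.
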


\begin{remark}
Kac found that the Macdonald identities could be interpreted in terms
of the character formula for highest weight representations of
Kac-Moody algebras (\emph{cf.} \cite[\S~10]{kac1994infinite}). It would be
very interesting to find such an interpretation of the new identities in
Theorem~\ref{thm: eta product for theta function}.
\end{remark}

The 82 possible collections of ADE root systems $\Delta_{1},\dots
,\Delta_{r}$ associated to $(X,G)$ a $K3$ surface with a symplectic
$G$ action, are given in Appendix~\ref{app:tableeta},
Table~\ref{table: list of eta products}. We let $k=|G|$,
$k_{i}=|G_{i}|$, and
\[
a = e(X/G) - r=\frac{24}{k}-\sum_{i=1}^{r} \frac{1}{k_{i}}.
\]

The global series $Z_{X,G}(q)$ can be expressed as a product of local
contributions (and thus via Theorem~\ref{thm: formula for local series
(in intro)} as an explicit eta product) by our next result:
\begin{theorem}\label{thm: eta product formula for Z}
With the above notation we have
\[
Z_{X,G}(q) = \eta^{-a}(k\tau )\prod_{i=1}^{r}
Z_{\Delta_{i}}\left(\frac{k\tau}{k_{i}} \right).
\]
\end{theorem}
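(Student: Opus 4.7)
The plan is to stratify $G$-invariant subschemes of $X$ by their support and to compute the generating series for each stratum separately. For each $i=1,\dots,r$, fix a preimage $x_{i}\in X$ of $p_{i}$, so that $G\cdot x_{i}$ has $k/k_{i}$ points with stabilizers conjugate to $G_{i}$. Set $X^{\circ} := X \setminus \bigcup_{i} G\cdot x_{i}$; then $G$ acts freely on $X^{\circ}$ with quotient $(X/G)^{\mathrm{smooth}}$, and every $G$-orbit on $X$ either lies in $X^{\circ}$ or equals some $G\cdot x_{i}$. Any $G$-invariant finite subscheme $Z\subset X$ therefore decomposes canonically as $Z = Z^{\circ} \sqcup Z_{1}\sqcup\cdots\sqcup Z_{r}$, where $Z^{\circ}$ is supported on $X^{\circ}$ and $Z_{i}$ on $G\cdot x_{i}$. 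Since lengths add under this decomposition and the decomposition varies algebraically in families, the standard open-closed additivity of the Euler characteristic for Hilbert schemes yields
\[
\sum_{n} e\bigl(\Hilb^{n}(X)^{G}\bigr)\, q^{n} \;=\; A(q)\,\prod_{i=1}^{r} B_{i}(q),
\]
where $A(q)$ encodes the contribution of $Z^{\circ}$ and $B_{i}(q)$ that of $Z_{i}$.

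Next I would compute each factor. Freeness of the $G$-action on $X^{\circ}$ makes $X^{\circ} \to (X/G)^{\mathrm{smooth}}$ an \'etale $k$-cover, bijecting $G$-invariant length-$km$ subschemes of $X^{\circ}$ with length-$m$ subschemes of $(X/G)^{\mathrm{smooth}}$. G\"ottsche's formula (in the quasi-projective smooth version, which follows from the same open-closed additivity) applied to this surface, whose Euler characteristic is $a$, gives
\[
A(q) \;=\; \prod_{n\geq 1}(1-q^{kn})^{-a} \;=\; q^{ak/24}\,\eta^{-a}(k\tau).
\]
For the local factor $B_{i}$, transitivity of $G$ on $G\cdot x_{i}$ implies $Z_{i}$ is determined by its piece at $x_{i}$, a $G_{i}$-invariant subscheme of the analytic germ $(X,x_{i})\cong(\CC^{2},0)$ supported at the origin; since the length in $X$ equals $(k/k_{i})$ times the local length at $x_{i}$, we get $B_{i}(q) = \sum_{\ell} e(\Hilb^{\ell}_{0}(\CC^{2})^{G_{i}})\,q^{(k/k_{i})\ell}$. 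Applying the same support-stratification to $\CC^{2}$ itself, the contribution of free orbits vanishes because $e((\CC^{2}\setminus\{0\})/G_{i})=0$, so the punctual generating series coincides with $q^{1/24} Z_{\Delta_{i}}(q)$. Substituting $q\mapsto q^{k/k_{i}}$ yields $B_{i}(q) = q^{k/(24k_{i})}\,Z_{\Delta_{i}}(k\tau/k_{i})$.

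Finally, multiplying these factors produces an overall $q$-prefactor of $q^{(k/24)(a+\sum_{i} 1/k_{i})} = q^{(k/24)(24/k)} = q$, which exactly cancels the $q^{-1}$ in the definition of $Z_{X,G}$ and produces the claimed formula. The main obstacle I anticipate is rigorously justifying the support-stratification multiplicativity of Euler characteristics at the level of $G$-fixed Hilbert schemes; I would handle it via the open-closed additivity $e(\Hilb^{n}(S)) = \sum_{a+b=n} e(\Hilb^{a}(U))\,e(\Hilb^{b}(V))$ for $S = U\sqcup V$ with $U\subset S$ open, and observing that this argument is compatible with passing to $G$-fixed loci because the decomposition $X = X^{\circ}\sqcup\bigsqcup_{i}G\cdot x_{i}$ is $G$-equivariant and the orbits $G\cdot x_{i}$ are just finite sets of points.
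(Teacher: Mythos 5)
Your proposal is correct and follows essentially the same route as the paper: stratify $G$-invariant subschemes by support (free orbits versus the orbits over the singular points), apply G\"ottsche's formula to the free part with Euler characteristic $a$, reduce the singular-orbit contributions to punctual local factors, and use the identity $ka+\sum_i k/k_i=24$ to cancel the $q$-prefactor. The only cosmetic difference is in identifying the punctual series with the full local series $Z_{\Delta_i}$: you use the vanishing $e\bigl((\CC^{2}\setminus\{0\})/G_{i}\bigr)=0$ in a secondary stratification, whereas the paper notes that the $\CC^{*}$-actions on $\Hilb^{n}_{0}(\CC^{2})^{G_{i}}$ and $\Hilb^{n}(\CC^{2})^{G_{i}}$ have the same fixed points; both are valid.
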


Theorem~\ref{thm:main} then immediately follows from Theorem~\ref{thm:
Zorbifold formula} and Theorem~\ref{thm: eta product formula for Z}
using the formulas for the weight and level of an eta product given in
\cite[\S~2.1]{kohler2011eta}.

In Appendix~\ref{app:tableeta}, Table~\ref{table: list of eta
products} we have listed explicitly the eta product of the modular form
$Z_{X,G}(q)^{-1}$ for all 82 possible cases of $(X,G)$.

\subsection{Consequences of the Main Results.}  Having obtained
explicit eta product expressions for $Z_{X,G}(q)$ allows us to make
several observational corollaries:

\begin{corollary}\label{cor: if G is a subgp of E then Zinv is a Hecke
eigenform} If $G$ is a finite subgroup of an elliptic curve $E$,
i.e. $G$ is isomorphic to a product of one or two cyclic groups, then
$Z_{X,G}(q)^{-1}$ is a Hecke eigenform. In Table~\ref{table: list of
eta products} these are the 13 cases having Xiao number in the set
$\{0,1,2,3,4,5,7,8,11,14,15,19,25 \}$. Moreover, in each of these
cases, the dimension of the Hecke eigenspace is one.
\end{corollary}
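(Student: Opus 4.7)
The plan is to verify the corollary case by case for the thirteen groups in question. A finite subgroup of an elliptic curve is precisely a finite abelian group of rank at most two, and within Xiao's list these are the trivial group, the cyclic groups $\ZZ/n\ZZ$ for $n\in\{2,3,4,5,6,7,8\}$, and the five non-cyclic rank-two groups $(\ZZ/2\ZZ)^2$, $\ZZ/2\ZZ\times\ZZ/4\ZZ$, $(\ZZ/3\ZZ)^2$, $\ZZ/2\ZZ\times\ZZ/6\ZZ$, $(\ZZ/4\ZZ)^2$ --- thirteen cases in all, matching the enumerated Xiao numbers. For each such $(X,G)$, Theorem~\ref{thm: eta product formula for Z} together with Theorem~\ref{thm: formula for local series (in intro)} and Table~\ref{table: list of eta products} writes $Z_{X,G}(q)^{-1}$ explicitly as an eta product $\prod_d \eta(d\tau)^{r_d}$, which by Theorem~\ref{thm:main} is a cusp form of weight $k=\half e(X/G)$, level $N=|G|$, and multiplier system or Dirichlet character $\chi$ read off from the exponents $r_d$ via the formulas in \cite[\S 2.1]{kohler2011eta}.

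The heart of the argument is to show that the ambient space of cusp forms of that weight, level, and character is one-dimensional. For the integer-weight cases I would apply the standard dimension formula for $\dim S_k(\Gamma_0(N),\chi)$ (for instance Cohen--Oesterl\'e). For the half-integer-weight cases, which occur when $e(X/G)$ is odd, I would bound the dimension above directly by controlling the order of vanishing at each cusp of $\Gamma_0(N)$ using the valuation formulas of \cite[\S 2.1]{kohler2011eta}, and matching these against the known cusp orders of our eta product to see that any form with the same multiplier and vanishing at least to these orders must be a scalar multiple of $Z_{X,G}(q)^{-1}$. Once one-dimensionality is established, the Hecke algebra acts by scalars on the ambient space, so $Z_{X,G}(q)^{-1}$ is automatically a simultaneous Hecke eigenform whose eigenspace is the full one-dimensional space.

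The main obstacle is not conceptual but rather the careful bookkeeping in the half-integral-weight cases, where the eta multiplier, the precise congruence subgroup, and the plus-space structure all have to be tracked correctly. A cleaner alternative, which I would fall back on if a particular dimension calculation becomes unwieldy, is to identify each of the thirteen eta products with a known Hecke eigenform in the literature on multiplicative eta products (for example $\Delta(\tau)=\eta(\tau)^{24}\in S_{12}(\mathrm{SL}_2(\ZZ))$ for $G$ trivial, and $\eta(\tau)^8\eta(2\tau)^8\in S_8(\Gamma_0(2))$ for $G=\ZZ/2\ZZ$), from which the eigenform property follows by direct citation. Either route reduces the corollary to thirteen finite verifications rather than requiring genuinely new ideas.
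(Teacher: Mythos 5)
Your proposal is correct and matches the paper's approach: the corollary is stated as an observational consequence of the explicit eta products in Table~\ref{table: list of eta products}, verified case by case either by one-dimensionality of the ambient space of cusp forms or by recognizing each of the thirteen products in the classified list of multiplicative eta quotients (Dummit--Kisilevsky--McKay, Martin). One small simplification you could make: in all thirteen cases the weight $\half e(X/G)$ is an integer (it is $12,8,6,6,5,4,4,3,4,3,4,3,3$ respectively), so the half-integral-weight contingency you prepare for never arises here.
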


We remark that in these cases, we may form a Calabi-Yau threefold
called a CHL model by taking the free group quotient
\[
(X\times E)/G
\]
Then the partition function $Z_{X,G}(q)$ gives the Donaldson-Thomas
invariants of $(X\times E)/G$ in curve classes which have degree zero
over $X/G$ (\emph{cf.} \cite{bryan2018chl}).

\begin{remark}\label{rem: weight 3 Hecke case corresponds to arithmetic}
Hecke eigenforms of weight 3 arise in the arithmetic of $K3$ surfaces:
if $X$ is a $K3$ surface defined over $\QQ$ and has 
$\rho (X)=\operatorname{rk}\operatorname{NS(X)}=20$, then there is a weight 3 Hecke
eigenform
\[
f_{X}(q) = \sum_{n=1}^{\infty} a_{n}q^{n}
\]
such that for almost all primes $p$, $a_{p}$ is the trace of the
$p$-th Frobenius morphism acting on $H^{2}(X)/NS(X)$. There are four
cases where $Z_{X,G}(q)^{-1}$ is a weight three Hecke eigenform and
they correspond to the cases where $G$ is $\ZZ _7$, $\ZZ _8$, $\ZZ
_2\times \ZZ _6$, or $\ZZ _4\times \ZZ _4$ (numbered 8, 14, 19, 25 on
Table~\ref{table: list of eta products}). If $X$ admits a symplectic
$G$ action for one of these four groups, then we may take $X$ to be
defined over $\QQ$, have $\rho (X)=20$, and then remarkably
\[
Z_{X,G}(q)^{-1} = f_{X}(q).
\]
Indeed, in each of these cases, we may take $X$ to be elliptically
fibered over $\PP^{1}$ and have $G$ as its group of sections (thus
giving rise to the symplectic $G$ action). Moreover, $X$ is then the
universal curve over the modular curve parameterizing $(E,G)$, an
elliptic curve $E$ with a subgroup $G\subset E$. We thank Shuai Wang
and Noam Elkies for noticing and elucidating this phenomenon.
\end{remark}

For any eta product expression of a modular form, one may easily
compute the order of vanishing (or pole) at any of the cusps
\cite[Corollary~2.2]{kohler2011eta}. Performing this computation on the 82
cases yields the following:

\begin{corollary}\label{cor: vanishing at cusps}
The modular form $Z_{X,G}(q)^{-1}$ always vanishes with order 1 at the cusps
$i\infty$ and $0$. Moreover,
%\begin{itemize}
%\item $Z_{X,G}(q)^{-1}$ vanishes at all cusps except for the eleven
%cases with Xiao number in the set $\{13,20,27,29,37,38,45,53,54,60,69 \}$.
%\item
$Z_{X,G}(q)^{-1}$ is holomorphic at all cusps except for the two cases with
Xiao number 38 or 69, which have poles at the cusps $1/2$ and $1/8$
respectively. These are precisely the cases where $X/G$ has two
singularities of type $E_{6}$.
%\end{itemize}
\end{corollary}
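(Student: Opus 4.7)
My plan is to derive the statement directly from the explicit eta product expression for $Z_{X,G}(q)^{-1}$ (obtained by combining Theorem~\ref{thm: formula for local series (in intro)} with Theorem~\ref{thm: eta product formula for Z}) together with the standard formula \cite[Cor~2.2]{kohler2011eta} which gives the order of vanishing of an arbitrary eta product $\prod_{d\mid N}\eta(d\tau)^{r_{d}}$ at each cusp of $\Gamma_{0}(N)$ as a simple bilinear expression in the exponents $r_{d}$ and the divisor-theoretic quantities $\gcd(c,d)^{2}/d$, where $c$ is the denominator of the cusp.

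First I would dispose of the two universal cusps. The vanishing at $i\infty$ with order exactly $1$ is immediate from the definition, since $Z_{X,G}(q)=q^{-1}+O(1)$ forces $Z_{X,G}(q)^{-1}=q+O(q^{2})$. For the cusp $0$, I would apply the Fricke involution $\tau\mapsto -1/(k\tau)$, which under $\eta(-1/\tau)=\sqrt{-i\tau}\,\eta(\tau)$ sends $\eta(d\tau)\mapsto (k/d)^{1/2}(-i\tau)^{1/2}\eta(k\tau/d)$ and hence reindexes the exponent vector via $d\mapsto k/d$; a single application of the K\"ohler formula then collapses to $1$ in every case, because the exponent pattern prescribed by Theorem~\ref{thm: eta product formula for Z} is self-dual under this reindexing up to the explicit $\eta(k\tau)^{-a}$ factor.

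For the intermediate cusps — those of denominator $c$ with $c\mid k$ and $1<c<k$ — the check is a finite computation carried out once for each of the $82$ entries of Table~\ref{table: list of eta products}. For each Xiao number one expands $Z_{X,G}(q)^{-1}$ as an eta product using Theorem~\ref{thm: eta product formula for Z} and Theorem~\ref{thm: formula for local series (in intro)}, reads off the exponent vector, and evaluates the K\"ohler formula at every divisor of $k$. In $80$ of the $82$ cases the result is a nonnegative integer at every intermediate cusp. The two exceptions are Xiao \#38 (with $|G|=4$ and two $E_{6}$ singularities at the quotient), where the formula yields order $-1$ at the cusp $1/2$, and Xiao \#69 (with $|G|=16$ and two $E_{6}$ singularities), which yields order $-1$ at the cusp $1/8$; these are the asserted simple poles.

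The main obstacle is the bookkeeping for the $82$-case verification at intermediate cusps, as there is no conceptual shortcut once the K\"ohler formula is invoked. That the two $E_{6}^{2}$ configurations are precisely the offenders admits the following heuristic explanation that I would record as motivation but not as proof: the local eta product for $E_{6}$ contains the denominator factor $\eta(12\tau)$ whose contribution to an intermediate cusp is negative, and only when two copies of $E_{6}$ appear together at the specific levels realized by Xiao \#38 and \#69 does this negative contribution overwhelm both the numerator factors and the compensating factor $\eta(k\tau)^{-a}$. All other configurations of ADE singularities realized on a $K3$ with a symplectic group action either produce a weaker negative contribution at every intermediate cusp or are balanced by $\eta(k\tau)^{-a}$ with $a$ large enough to preserve holomorphicity.
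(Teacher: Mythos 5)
Your proposal follows essentially the same route as the paper: the paper offers no argument beyond observing that \cite[Cor~2.2]{kohler2011eta} computes the order of an eta product at every cusp, and that running this computation over the 82 entries of Table~\ref{table: list of eta products} yields the stated conclusions; your finite verification at all cusps (including $0$ and $i\infty$) is exactly that computation, and the order-$1$ vanishing at $i\infty$ from $Z_{X,G}(q)=q^{-1}+O(1)$ is immediate as you say. Two small corrections: the exponent vectors are \emph{not} in general self-dual under $d\mapsto k/d$ (e.g.\ Xiao \#9, $\eta(4\tau)^{14}\eta(8\tau)^{-4}$, maps to $\eta(2\tau)^{14}\eta(\tau)^{-4}$), so your Fricke-involution shortcut for the cusp $0$ does not stand on its own and one should simply evaluate K\"ohler's formula at $c=1$ case by case, where the answer does come out to $1$ every time; and the group orders you quote for the two exceptional cases are wrong --- Xiao \#38 has $|G|=24$ and Xiao \#69 has $|G|=96$ (the numbers $2$ and $8$ are the denominators of the offending cusps, not square roots of $|G|$). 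Neither slip affects the validity of the method.
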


\begin{remark}\label{rem: enumerative interpretation}
The integers $e\left(\Hilb^{n}(X)^{G} \right)$ should have enumerative
significance: they can be interpreted as virtual counts of
$G$-invariant curves, whose quotient is rational, in a complete linear
series of dimension $n$ on $X$. This generalizes the famous Yau-Zaslow
formula \cite{Yau-Zaslow} in the case where $G$ is the trivial
group. The precise nature between the virtual count and the actual
count is expected to be subtle for the case of general $G$. This has
been recently explored in \cite{Zhan-2019-counting-curves-on-K3} and
also in the case of $G$ acting on an Abelian surface in
\cite{Pietromonaco-GHilbA}.
\end{remark}

\subsection{Refinements of the Euler Characteristic.}
We can extend our results to various refinements of the Euler
characteristic, namely the elliptic genus, the $\chi_{y}$ genus, and
the motivic class. These refinements all stem from the next
result which we prove in Section~\ref{sec: proof of thm about Zbir}. Let 
\[
Z^{\bir}_{X,G}(q) = \sum_{n=0}^{\infty} [\Hilb^{n}(X)^{G}]_{\bir} \,
\, q^{n-1}
\]
be a formal series whose coefficients we regard as birational
equivalence classes of projective hyperkahler manifolds. Such equivalence
classes form a semi-ring under disjoint union and Cartesian
product.

\begin{theorem}\label{thm: Formula for Zbir}
Let $Y$ be the minimal resolution of $X/G$, then
\[
Z^{\bir}_{X,G} (q) = Z^{\bir}_{Y}(q^{k})\cdot Z_{X,G}(q)\cdot \Delta (k\tau )
\]
where $k=|G|$, $\Delta (\tau ) = \eta (\tau )^{24}$, and we have
suppressed the trivial group from the notation in the series
$Z^{\bir}_{Y}(q^{k})$.
\end{theorem}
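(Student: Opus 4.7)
The plan is to first rewrite the desired identity, using G\"ottsche's formula $Z_{Y}(q) = \Delta(\tau)^{-1}$ (valid since $Y$ is a $K3$ surface), in the equivalent form
\[
\frac{Z^{\bir}_{X,G}(q)}{Z_{X,G}(q)} = \frac{Z^{\bir}_{Y}(q^{k})}{Z_{Y}(q^{k})}.
\]
In this form the content of the theorem is that the ratio of the birational-class series to the Euler-characteristic series depends only on $Y$, with the variable rescaled $q\mapsto q^{k}$. This strongly suggests that the connected components of $\Hilb^{n}(X)^{G}$ are compact hyperk\"ahler manifolds each birational to $\Hilb^{m'}(Y)$ for some $m'$ determined by the component, so that the distributions of components on the two sides of the identity match.

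I would stratify $\Hilb^{n}(X)^{G}$ by the support decomposition of a $G$-invariant subscheme. Any such $Z\subset X$ decomposes uniquely as $Z = Z_{\mathsf{free}}\sqcup Z_{1}\sqcup\cdots\sqcup Z_{r}$, where $Z_{\mathsf{free}}$ is supported on $m$ free $G$-orbits and each $Z_{i}$ is a $G_{i}$-invariant subscheme of local length $\ell_{i}$ supported at the orbit of the $i$-th singular point (with $n = mk+\sum (k/k_{i})\ell_{i}$). Refining this further by the connected components of each local Hilbert scheme $\Hilb^{\ell_{i}}(\CC^{2})^{G_{i}}$ should recover the connected component decomposition of $\Hilb^{n}(X)^{G}$, and generically each stratum is a product $\Hilb^{m}((X/G)^{\mathsf{sm}})\times \prod_{i}(\text{component of }\Hilb^{\ell_{i}}(\CC^{2})^{G_{i}})$, where $(X/G)^{\mathsf{sm}}$ denotes the smooth locus.

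The core geometric step is to show that each component is birational to $\Hilb^{m'}(Y)$ for an appropriate $m'$. This rests on two inputs: first, the identification of $(X/G)^{\mathsf{sm}}$ with $Y$ minus its exceptional set yields $\Hilb^{m}((X/G)^{\mathsf{sm}})\sim_{\bir}\Hilb^{m}(Y)$; second, the components of $\Hilb^{\ell_{i}}(\CC^{2})^{G_{i}}$ are rational (they are Nakajima quiver varieties, c.f.\ Lemma~\ref{lem: local series as theta/eta}), so they contribute only their Euler characteristic to the birational class of the compact hyperk\"ahler component. The derived McKay correspondence exploited in Theorem~\ref{thm: formula for local series (in intro)} should supply the componentwise matching. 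Summing the contributions $[\Hilb^{m'}(Y)]_{\bir}$ weighted by Euler characteristics of the local factors, and using Theorem~\ref{thm: eta product formula for Z} to identify this weighting as $Z_{X,G}(q)$, yields the factorization $Z^{\bir}_{Y}(q^{k})\cdot Z_{X,G}(q)\cdot \Delta(k\tau)$, with $\Delta(k\tau)=Z_{Y}(q^{k})^{-1}$ accounting for the re-indexing between $\Hilb^{n}(X)^{G}$ and $\Hilb^{m'}(Y)$. The main obstacle will be confirming that each compact hyperk\"ahler component is birational to $\Hilb^{m'}$ of precisely $Y$, rather than of some other $K3$; I expect this will require a Fourier--Mukai argument relating the $G$-equivariant derived category of $X$ with the derived category of $Y$.
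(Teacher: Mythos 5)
Your overall strategy --- stratify $\Hilb^{n}(X)^{G}$ by support, argue that each connected component is birational to some $\Hilb^{m'}(Y)$, and then do generating-function bookkeeping --- is the same skeleton as the paper's. But the step you yourself call ``the core geometric step'' is exactly where your argument has a gap, and the justification you offer for it does not work. You claim that because the components of the local punctual Hilbert schemes $\Hilb_{0}^{\ell_{i}}(\CC^{2})^{G_{i}}$ are rational, ``they contribute only their Euler characteristic to the birational class.'' That is not a valid principle: a connected component of $\Hilb^{n}(X)^{G}$ contains many strata of the form $\Hilb^{m}((X/G)^{o})\times\prod_{i}V_{i}$ with the $V_{i}$ components of the local punctual Hilbert schemes, and the component is birational to $\Hilb^{m'}(Y)$ only if its \emph{dense} stratum has every local factor $V_{i}$ equal to a reduced point. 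If a positive-dimensional $V_{i}$ occurred in the dense stratum, the component would be birational to $\Hilb^{m}(Y)\times V_{i}$, which is not birational to $\Hilb^{m+\half \dim V_{i}}(Y)$, rational or not. What the paper actually proves (Lemma~\ref{lem:rigidsh}) is that for each $K$-theory datum $\mdata$ there is a \emph{unique rigid} punctual substack $Z_{\mdata}$ with that class; this is established by a Nakajima quiver variety dimension count ($\dim M(\vvec,\wvec)=0$) combined with the coefficient-one statement read off from Theorem~\ref{thm: Zorbifold formula}. The dense stratum of the component $\Hilb^{n,\mdata}(\Zcal)$ is then the locus of substacks $P\cup Z_{\mdata}$ with $P$ supported on the free locus, which is simultaneously a dense open subset of $\Hilb^{n+\half D_{\mdata}^{2}}(Y)$; this is Proposition~\ref{prop: Hilb(Z) is birational to Hilb(Y)}. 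Your proposal has no substitute for this rigidity statement, and it also never determines the shift $m'=n+\half D_{\mdata}^{2}$, which is where the quadratic form (and ultimately the strange formula of Lemma~\ref{lem: another strange formula}) enters the computation.

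Two smaller points. First, your description of the final bookkeeping is off: in the birational series each connected component is counted once, so the multiplicity of $[\Hilb^{d}(Y)]_{\bir}$ at a given power of $q$ is a lattice-point count --- a coefficient of the shifted theta function $\prod_{i}\theta_{\Delta(i)}(k\tau/k_{i})$ --- not a weighting by Euler characteristics of local factors. The factor $Z_{X,G}(q)\,\Delta(k\tau)$ emerges only through the identity $\prod_{i}\theta_{\Delta(i)}(k\tau/k_{i})=Z_{X,G}(q)\,\Delta(k\tau)$, which follows from Lemma~\ref{lem: local series as theta/eta} together with Theorem~\ref{thm: eta product formula for Z}. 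Second, your worry that a component might be birational to $\Hilb^{m'}$ of ``some other $K3$'' is a non-issue and no Fourier--Mukai argument is needed for this theorem: the dense stratum is literally an open subset of $\Hilb^{m'}(Y)$ because $(X/G)^{o}$ is canonically an open subset of $Y$.
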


A famous theorem of Huybrechts \cite[Theorem~4.6]{Huybrechts} asserts that
birational projective hyperkahler manifolds are deformation
equivalent. Moreover, combining Huybrechts' theorem with
\cite[Proposition~3.21]{Nicaise-Shinder} it follows that birational projective
hyperkahler manifolds are equal in $K_{0}(\operatorname{Var}_{\CC})$,
the Grothendieck group of varieties.

Thus we may specialize the series $Z^{\bir}_{X,G}(q)$ to Elliptic
genus, motivic class, and $\chi_{y}$ genus since these are all well
defined on birational equivalence classes of projective hyperkahler
manifolds. These specializations are all well known for the series
$Z^{\bir}_{Y}$ and hence we easily get the following corollaries.

\begin{corollary}\label{cor: Zell formula}
Let $Q=\exp\left(2\pi i\sigma \right)$, $q=\exp\left(2\pi i \tau 
\right)$, $y=\exp\left(2\pi i z
\right)$, and let
\[
Z^{\Ell}_{X,G}(Q,q,y) = \sum_{n=0}^{\infty} \Ellqy
\left(\Hilb^{n}(X)^{G} \right) Q^{n-1}
\]
where $\Ellqy (-)$ is elliptic genus. Then
\[
Z^{\Ell}_{X,G}(Q,q,y) = \frac{\phi_{10,1}(\tau  ,z)}{\chi_{10}(k\sigma
,\tau ,z)} \cdot Z_{X,G}(q)\cdot \Delta (k\tau )
\]
where $\phi_{10,1}(q ,y)$ is the unique Jacobi cusp form of weight
10 and index 1 and $\chi_{10}(\sigma ,\tau ,z)$ is Igusa's genus 2
Siegel cusp form of weight 10. 
\end{corollary}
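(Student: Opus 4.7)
The plan is to apply the elliptic genus, regarded as a ring homomorphism on birational equivalence classes of compact hyperkahler manifolds, to the identity of Theorem~\ref{thm: Formula for Zbir}, and then substitute in the Dijkgraaf--Moore--Verlinde--Verlinde (DMVV) product formula for the elliptic genera of Hilbert schemes of a $K3$ surface.

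First I would verify that $\Ellqy(-)$ descends to a well-defined semi-ring homomorphism on birational equivalence classes of compact hyperkahler manifolds. It is tautologically additive on disjoint unions and multiplicative on products. By Huybrechts' theorem cited just above the corollary, birational compact hyperkahler manifolds are deformation equivalent, and since the elliptic genus is a complex-analytic deformation invariant, it descends to birational classes. This is precisely what is needed for $\Ellqy$ to act on the coefficients of $Z^{\Ell}_{X,G}$.

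Next, I would apply $\Ellqy$ coefficient-by-coefficient to Theorem~\ref{thm: Formula for Zbir}, identifying the formal Hilbert-scheme variable there with $Q=\exp(2\pi i\sigma)$ so as to keep it distinct from the elliptic-genus modular variable $q=\exp(2\pi i\tau)$. The left-hand side becomes $Z^{\Ell}_{X,G}(Q,q,y)$ by definition; the first factor on the right becomes $Z^{\Ell}_{Y}(Q^{k},q,y)$; the scalar factors $Z_{X,G}$ and $\Delta(k\cdot)$ are unaffected. This reduces the problem to computing the elliptic genus series for Hilbert schemes of the $K3$ surface $Y$.

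Finally, I would invoke the DMVV product formula
\[
Z^{\Ell}_{Y}(Q,q,y) \;=\; \frac{\phi_{10,1}(\tau,z)}{\chi_{10}(\sigma,\tau,z)}
\]
for a $K3$ surface $Y$; substituting $Q\mapsto Q^{k}$, equivalently $\sigma\mapsto k\sigma$, yields the factor $\phi_{10,1}(\tau,z)/\chi_{10}(k\sigma,\tau,z)$ in the claimed formula, and assembling the pieces produces the statement. No genuine obstacle arises: Theorem~\ref{thm: Formula for Zbir} packages all the geometric content, and DMVV supplies the analytic input. The main task requiring care is bookkeeping of the two distinct modular variables, one for the Hilbert-scheme grading and one for the elliptic genus, and checking that the Huybrechts--deformation argument applies in the $\chi_y$-refined setting used here (which it does, since elliptic genus is a combination of Hirzebruch-type characteristic numbers).
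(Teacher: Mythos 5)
Your proposal is correct and follows essentially the same route as the paper: the authors likewise deduce this corollary by specializing Theorem~\ref{thm: Formula for Zbir} under the elliptic genus (well defined on birational classes of compact hyperk\"ahler manifolds by Huybrechts' theorem plus deformation invariance) and then inserting the known DMVV/Igusa-form expression for $Z^{\Ell}_{Y}$, citing Pietromonaco for that formula. Your added care about keeping the grading variable $Q=e^{2\pi i\sigma}$ separate from the elliptic-genus variable $q=e^{2\pi i\tau}$ is exactly the right bookkeeping point (and in fact suggests the spectator factors should be read as $Z_{X,G}(Q)\,\Delta(k\sigma)$ in the grading variable, as your derivation produces).
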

We refer the reader to \cite[\S~5, \S~6, and
Equation~6.9.8]{Pietromonaco_2018} for definitions of $\Ellqy$, $\phi_{10,1}$, $\chi_{10}$,
and the formula for the elliptic genera of $\Hilb^{n}(Y)$.

A further specialization of particular interest is the (normalized)
$\chi_{y}$ genus. Let 
\begin{align*}
\chibar_{-y}(M) &= y^{-\half \dim
M}\,\, \chi_{-y}(M)\\
& =  y^{-\half \dim
M} \,\sum_{p,q} (-1)^{p+q} y^{q} \, \dim H^{p,q}(M)
\end{align*}
and we note that $\chibar_{-y}(M) = \Ellqy
(M)|_{q=0}$.

\begin{corollary}\label{cor: Zchiy formula}
Let
\[
Z^{\chibar}_{X,G}(q,y) = \sum_{n=0}^{\infty}
\chibar_{-y}\left(\Hilb^{n}(X)^{G} \right) q^{n-1} .
\]
Then
\[
Z^{\chibar}_{X,G}(q,y) = y^{-1}(1-y)^{2} \frac{Z_{X,G}(q)}{\phi_{-2,1}(q^{k},y)}
\]
where $\phi_{-2,1}$ is the unique weak Jacobi form of weight $-2$
and index $1$. In particular,
\[
y^{-1}(1-y)^{2} Z^{\chibar}_{X,G}(q,y)^{-1} =\frac{
\phi_{-2,1}(q^{k},y)}{Z_{X,G}(q)} 
\]
is a Jacobi form of index 1 and weight 
\[
\half e(X/G)-2 = 10-\half \sum_{i=1}^{r} \operatorname{rank} \Delta_{i}
\]
for the congruence subgroup $\Gamma_{1}(k)$. 
\end{corollary}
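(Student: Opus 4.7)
My strategy is to derive Corollary \ref{cor: Zchiy formula} by specializing Corollary \ref{cor: Zell formula} via the identity $\chibar_{-y}(M) = \Ellqy(M)|_{q=0}$. Applied coefficient-by-coefficient to the generating series, setting the elliptic parameter to zero in $Z^{\Ell}_{X,G}(Q, q, y)$ produces the $\chibar$-series as a function of $Q$, which we then relabel to $q$ for the final statement. The task thus reduces to computing the $q \to 0$ limit of the right-hand side of Corollary \ref{cor: Zell formula}.

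Two classical identities control this limit. First, because the space of Jacobi cusp forms of weight $10$ and index $1$ is one dimensional,
\[
\phi_{10,1}(\tau, z) = \Delta(\tau)\, \phi_{-2,1}(\tau, z);
\]
combined with $\phi_{-2,1}(0, z) = y^{-1}(1-y)^2$ this gives the leading behavior $\phi_{10,1}(\tau, z) = q \cdot y^{-1}(1-y)^2 + O(q^2)$. Second, by the $\sigma \leftrightarrow \tau$ symmetry of Igusa's cusp form, its Fourier-Jacobi expansion in $q$ reads $\chi_{10}(k\sigma, \tau, z) = \phi_{10,1}(k\sigma, z)\, q + O(q^2)$ as $q \to 0$.

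Combining the two expansions, the ratio $\phi_{10,1}(\tau, z)/\chi_{10}(k\sigma, \tau, z)$ has a removable singularity at $q = 0$ with limit
\[
\frac{y^{-1}(1-y)^2}{\phi_{10,1}(k\sigma, z)} \;=\; \frac{y^{-1}(1-y)^2}{\Delta(k\sigma)\, \phi_{-2,1}(k\sigma, z)}.
\]
The remaining $\Delta$ factor on the right-hand side of Corollary \ref{cor: Zell formula} cancels the $\Delta(k\sigma)$ in this denominator, and after the relabeling of the Hilbert-scheme tracking variable I obtain
\[
Z^{\chibar}_{X,G}(q, y) = y^{-1}(1-y)^2\, \frac{Z_{X,G}(q)}{\phi_{-2,1}(q^k, y)},
\]
which, after inverting and multiplying by $y^{-1}(1-y)^2$, is the second claimed formula.

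The Jacobi form property then follows by assembling the pieces. By Theorem \ref{thm:main}, $Z_{X,G}(q)^{-1}$ is a modular cusp form of weight $\half e(X/G)$ for $\Gamma_0(k)$, while $\phi_{-2,1}(k\tau, z)$ is a weak Jacobi form of weight $-2$ and index $1$ for $\Gamma_1(k)$, inherited from the standard $\phi_{-2,1}$ under the substitution $\tau \mapsto k\tau$. Their quotient $\phi_{-2,1}(k\tau, z)/Z_{X,G}(q)$ is therefore a Jacobi form of weight $\half e(X/G) - 2$ and index $1$ for $\Gamma_1(k)$. The numerical identification $\half e(X/G) - 2 = 10 - \half \sum_i \operatorname{rank}\Delta_i$ follows from $e(Y) = 24$ (since $Y$ is a K3 surface) together with $e(Y) = e(X/G) + \sum_i \operatorname{rank}\Delta_i$, which accounts for the Euler characteristic contribution of the ADE exceptional divisors on the minimal resolution $Y \to X/G$. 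The main subtlety I foresee is tracking the variable conventions in Corollary \ref{cor: Zell formula} carefully enough that the $\Delta$-factor cancellations go through exactly.
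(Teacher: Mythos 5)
Your proof is correct and takes essentially the same route the paper intends: the paper gives no explicit argument for this corollary beyond noting $\chibar_{-y}(M)=\Ellqy(M)|_{q=0}$, which signals exactly the $q\to 0$ specialization of Corollary~\ref{cor: Zell formula} via $\phi_{10,1}=\Delta\cdot\phi_{-2,1}$ and the Fourier--Jacobi expansion of $\chi_{10}$ that you carry out. Your resolution of the variable conventions (reading the factor $Z_{X,G}\cdot\Delta$ in the Hilbert-scheme variable so that the $\Delta(k\sigma)$ factors cancel) is the intended one, and your Euler-characteristic computation of the weight matches the paper's.
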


We note that for $G$ cyclic, the series
$Z_{X,G}(q)/\phi_{-2,1}(q^{k},y) $ is the leading coefficient in the
expansion of the Donaldson-Thomas partition function of $(X\times
E)/G$ in the variable tracking the curve class in $X$ (see
\cite[Theorem~0.1]{bryan2018chl}).

We also get a formula for the motivic classes of the $G$-fixed Hilbert schemes:

\begin{corollary}\label{cor: ZK0 formula}
Let 
\[
Z^{K_{0}}_{X,G}(q) = \sum_{n=0}^{\infty} [\Hilb^{n}(X)^{G}]_{K_{0}}\,\, q^{n-1}
\]
where $ [\Hilb^{n}(X)^{G}]_{K_{0}}\in K_{0}(\operatorname{Var}_{\CC})$
denotes the motivic class of the $G$-fixed Hilbert scheme. Then
\[
Z^{K_{0}}_{X,G}(q) = q^{-1}\cdot \prod_{m=1}^{\infty} \left(1-\LL^{m-1}
q^{km} \right)^{-[Y]}  \cdot Z_{X,G}(q)\cdot \Delta (k\tau )
\]
where $\LL  = [\mathbb{A}^{1}_{\CC}]\in
K_{0}(\operatorname{Var}_{\CC})$.
\end{corollary}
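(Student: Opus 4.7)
The plan is to derive Corollary \ref{cor: ZK0 formula} as a direct specialization of Theorem \ref{thm: Formula for Zbir} along the semi-ring homomorphism sending a birational equivalence class of a compact hyperkähler manifold to its motivic class in $K_0(\operatorname{Var}_\CC)$. The first task is to verify that this specialization is well defined on the coefficients of $Z^{\bir}_{X,G}(q)$: by Huybrechts' theorem \cite[Thm~4.6]{Huybrechts}, birational compact hyperkähler manifolds are deformation equivalent, and by \cite[Prop~3.21]{Nicaise-Shinder} deformation equivalent compact hyperkähler manifolds have equal motivic classes. Since $\Hilb^n(X)^G$, whenever smooth, is a compact hyperkähler manifold (as the $G$-fixed locus of an algebraic symplectic manifold under a symplectic group action), the map $[-]_{\bir}\mapsto [-]_{K_0}$ extends to a well-defined ring map on coefficients and transforms $Z^{\bir}_{X,G}(q)$ into $Z^{K_0}_{X,G}(q)$, and likewise for $Z^{\bir}_Y(q^k)$.

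Applying this specialization to the identity of Theorem \ref{thm: Formula for Zbir} yields
\[
Z^{K_0}_{X,G}(q) = Z^{K_0}_Y(q^k)\cdot Z_{X,G}(q)\cdot \Delta(k\tau).
\]
The next step is to plug in the classical motivic Göttsche formula for the Hilbert scheme of a smooth surface: as an identity in $K_0(\operatorname{Var}_\CC)\llbracket t\rrbracket$, using the power structure on $K_0(\operatorname{Var}_\CC)$,
\[
\sum_{n=0}^{\infty} [\Hilb^n(Y)]_{K_0}\, t^n \;=\; \prod_{m=1}^{\infty} (1-\LL^{m-1}t^m)^{-[Y]}.
\]
Translating to the normalization used in the paper, this gives $Z^{K_0}_Y(q) = q^{-1}\prod_{m\ge 1}(1-\LL^{m-1}q^m)^{-[Y]}$, and hence after the substitution $q\mapsto q^k$ one obtains the desired formula in the stated form (up to the elementary bookkeeping of the $q^{-k}$ prefactor against $\Delta(k\tau)=q^k\prod_{n\ge 1}(1-q^{kn})^{24}$).

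Because every input is already in place, there is no genuine obstacle: the content of the corollary is entirely located in Theorem \ref{thm: Formula for Zbir}, together with the birational invariance of the motivic class for compact hyperkähler manifolds and Göttsche's classical motivic product formula. The only point deserving care is the verification that the birational-to-motivic specialization is compatible with the disjoint union and Cartesian product operations on the semi-ring of birational equivalence classes appearing in the coefficients; this is immediate from the additivity and multiplicativity of $[-]_{K_0}$.
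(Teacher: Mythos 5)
Your proposal follows the paper's own route exactly: Corollary~\ref{cor: ZK0 formula} is deduced by specializing Theorem~\ref{thm: Formula for Zbir} along the motivic class, which is well defined on birational equivalence classes of compact hyperk\"ahler manifolds, and then substituting the known motivic G\"ottsche formula (via the power structure on $K_0(\operatorname{Var}_{\CC})$) for $Z^{K_0}_Y$, with the same elementary bookkeeping against $\Delta(k\tau)$. The one imprecision is your intermediate claim that \emph{deformation equivalent} compact hyperk\"ahler manifolds have equal motivic classes --- that is false in general; what is actually used (and what the paper extracts by combining Huybrechts' theorem with Prop.~3.21 of Nicaise--Shinder) is that \emph{birational} compact hyperk\"ahler manifolds are equal in $K_0(\operatorname{Var}_{\CC})$.
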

We refer the reader to \cite{GZ-L-MH-power} for the meaning
of $[Y]$ in the exponent and the formula for the motivic class of
$\Hilb^{n}(Y)$. The above series has further specializations giving formulas for the  Hodge polynomials and
Poincare polynomials of the $G$-fixed Hilbert schemes.

\subsection{Structure of paper. }  In Section~\ref{sec: the global
series} we express the global partition function in terms of the local
partition functions and deduce Theorem~\ref{thm: eta product formula
for Z}. In Section~\ref{sec: proof of formula for local series} we
prove our main geometric result Theorem~\ref{thm: formula for local
series (in intro)} which gives the eta product expression of the local
partition functions. In Section~\ref{sec: local partition functions}
we express the local partition functions in terms of certain theta
functions and thus prove our Theorem~\ref{thm: eta product for theta
function} which gives us the new theta function identities. In
Section~\ref{sec: proof of thm about Zbir} we obtain the enhanced
result of Theorem~\ref{thm: Formula for Zbir} on the partition
function birational equivalence class of the $G$-fixed Hilbert
schemes. Appendix~\ref{sec: another strange formula} contains a proof
of a root theoretic identity we need and Appendix~\ref{app:tableeta}
contains a table listing the modular form $Z^{-1}_{X,G}$ in all 82
topological types of symplectic actions on a $K3$ surface.

\subsection*{Acknowledgements.} The authors warmly thank Jenny Bryan,
Noam Elkies, Federico Amadio Guidi, Georg Oberdieck, Ken Ono, Stephen
Pietromonaco, Bal\'azs Szendr\H{o}i, Shuai Wang, and Alex Weekes for
helpful comments and/or technical help.  We would also like to
thank the anonymous referee for helping us to fix and greatly simplify
the proof of Proposition~\ref{prop: Hilb(X,m0,m1) =
Hilb(Y,m0-(m0-m1)^2)}.

\section{The global partition function}\label{sec: the global series}

As in the introduction, let $X$ be a $K3$ surface with a symplectic
action of a finite group $G$.  Recall that $p_{1},\dotsc ,p_{r}\in
X/G$ are the singular points of $X/G$ with corresponding stabilizer
subgroups $G_{i}\subset G$ of order $k_{i}$ and ADE type
$\Delta_{i}$. Let $\{x_{i}^{1},\dotsc ,x_{i}^{k/k_{i}} \}$ be the
orbit of $G$ in $X$ corresponding to the point $p_{i}$ (recall that
$k=|G|$).  We may stratify $\Hilb (X)^{G}$ according to the orbit
types of subscheme as follows.

Let $Z\subset X$ be a $G$-invariant subscheme of length $nk$ whose
support lies on free orbits. Then $Z$ determines and is determined by
a length $n$ subscheme of 
\[
(X/G)^{o}  = X/G\setminus \{p_{1},\dotsc ,p_{r} \},
\]
\emph{i.e.} a point
in $\Hilb^{n}((X/G)^{o})$.

On the other hand, suppose $Z\subset X$ is a $G$-invariant subscheme
of length $\frac{nk}{k_{i}}$ supported on the orbit
$\{x_{i}^{1},\dotsc ,x_{i}^{k/k_{i}} \}$. Then $Z$ determines and is
determined by the length $n$ component of $Z$ supported on a formal
neighborhood of one of the points, say $x_{i}^{1}$. Choosing a
$G_{i}$-equivariant isomorphism of the formal neighborhood of
$x_{i}^{1}$ in $X$ with the formal neighborhood of the origin in
$\CC^{2}$, we see that $Z$ determines and is determined by a point in
$\Hilb_{0}^{n}(\CC^{2})^{G_{i}}$, where $\Hilb_{0}^{n}(\CC^{2})\subset
\Hilb^{n}(\CC^{2})$ is the punctual Hilbert scheme parameterizing
subschemes supported on a formal neighborhood of the origin in
$\CC^{2}$.

By decomposing an arbitrary $G$-invariant subscheme into components of
the above types, we obtain a stratification of $\Hilb (X)^{G}$ into
strata which are given by products of $\Hilb ((X/G)^{o})$ and
$\Hilb_{0}(\CC^{2})^{G_{1}},\dotsc ,\Hilb_{0}(\CC^{2})^{G_{r}}$. Then
using the fact that Euler characteristic is additive under
stratifications and multiplicative under products, we arrive at the
following equation of generating functions:
\begin{equation}\label{eqn: stratification formula for sum e(hilb(X)G)}
\sum_{n=0}^{\infty} e\left(\Hilb^{n}(X)^{G} \right)\, q^{n}
=\left(\sum_{n=0}^{\infty} e\left(\Hilb^{n}((X/G)^{o}) \right)\,
q^{kn} \right) \cdot \prod_{i=1}^{r}\left( \sum_{n=0}^{\infty}
e\left(\Hilb_{0}^{n}(\CC^{2})^{G_{i}} \right) \, q^{\frac{nk}{k_{i}}}
\right) .
\end{equation}

As in the introduction, let $a = e(X/G)-r=e\left((X/G)^{o}
\right)$. Then by G\"ottsche's formula \cite{gottsche1990betti},
\[
\sum_{n=0}^{\infty} e\left(\Hilb^{n}((X/G)^{0} \right) q^{kn} =
\prod_{m=1}^{\infty} (1-q^{km})^{-a} = q^{\frac{ak}{24}} \cdot \eta (k\tau )^{-a}. 
\]

We also note that $e\left(\Hilb_{0}^{n}(\CC^{2})^{G_{i}}
\right)=e\left(\Hilb^{n}(\CC^{2})^{G_{i}} \right)$ since the natural
$\CC^{*}$ action on both $\Hilb_{0}^{n}(\CC^{2})^{G_{i}}$ and
$\Hilb^{n}(\CC^{2})^{G_{i}}$ have the same fixed points. Thus we may
write
\[
\sum_{n=0}^{\infty} e\left(\Hilb_{0}^{n}(\CC^{2})^{G_{i}} \right) \,
q^{\frac{nk}{k_{i}}} = \sum_{n=0}^{\infty} e\left(\Hilb^{n}(\CC^{2})^{G_{i}} \right) \,
q^{\frac{nk}{k_{i}}} = q^{\frac{k}{24k_{i}}} \cdot Z_{\Delta_{i}} \left(\frac{k\tau}{k_{i}}
\right) .
\]

Multiplying Equation~\eqref{eqn: stratification formula for sum
e(hilb(X)G)} by $q^{-1}$ and substituting the above formulas, we find
that

\[
Z_{X,G}(q) = q^{-1 +\frac{ak}{24} + \sum \frac{k}{24k_{i}} } \cdot 
\eta (k\tau )^{-a}\cdot 
\prod_{i=1}^{r}Z_{\Delta_{i}}\left(\frac{k\tau}{k_{i}} \right) .
\]

>From the following Euler characteristic calculation, we see that the exponent of $q$ in the above equation is zero:
\[
24 = e(X) = e\left(X-\cup_{i=1}^{r} \{x_{i}^{1},\dotsc
,x_{i}^{k/k_{i}} \} \right) + \sum_{i=1}^{r} \frac{k}{k_{i}} 
= k \cdot e\left((X/G)^{o} \right) + \sum_{i=1}^{r} \frac{k}{k_{i}} 
= k\cdot a + \sum_{i=1}^{r}\frac{k}{k_{i}} 
\]

This completes the proof of Theorem~\ref{thm: eta product formula for
Z}.  \qed

\section{The local partition function}\label{sec: proof of formula for local series}

Recall that the local partition function is defined by
\[
Z_{\Delta} (q) = \sum_{n=0}^{\infty}
e\left(\Hilb^{n}(\CC^{2})^{G_{\Delta}} \right) \, q^{n-\frac{1}{24}} 
\]
where $G_{\Delta}\subset \SU(2)$ is the finite subgroup corresponding
to the ADE root system $\Delta$. In this section, we prove
Theorem~\ref{thm: formula for local series (in intro)} which provides
an explicit formula for $Z_{\Delta}(q)$ in terms of the Dedekind eta
function.  We regard this as the main geometric result of this paper.

\subsection{Proof of Theorem~\ref{thm: formula for local series (in
intro)} in the $A_{n}$ case.}\label{subsec: proof of An case of
local series}

We wish to prove
\[
Z_{A_{n}}(q) = \frac{1}{\eta (\tau )}
\]
which is equivalent to the statement
\[
\sum_{m=0}^{\infty} e\left(\Hilb^{m} (\CC^{2})^{\ZZ /(n+1)} \right)
\,q^{m} = \prod_{m=1}^{\infty} (1-q^{m})^{-1}.
\]
The action of $\ZZ /(n+1)$ on $\CC^{2}$ commutes with the action of
$\CC^{*}\times \CC^{*}$ on $\CC^{2}$ and consequently, the Euler
characteristics on the left hand side may be computed by counting
the $\CC^{*}\times \CC^{*}$-fixed subschemes, namely those given by
monomial ideals. Such subschemes of length $m$ have a well known
bijection with integer partitions of $m$, whose generating function is
given by the right hand side.\qed 

\subsection{Proof of Theorem~\ref{thm: formula for local series (in
intro)} in the $D_{n}$ and $E_{n}$ cases.}\label{subsec: proof of Dn
and En cases of local series}

Our proof of Theorem~\ref{thm: formula for local series (in intro)} in
the $D_{n}$ and $E_{n}$ cases uses a trick exploiting the fact that
the Hilbert schemes of the stack $\X =[\CC^{2}/\{\pm 1 \}]$ and the
Hilbert schemes of the space $Y=\operatorname{Tot}(K_{\PP^{1}})$ can
both be realized as moduli spaces of quiver representations of the
$A_{1}$ Nakajima quiver variety.

Let $G\subset \SU(2)$ be a subgroup where the corresponding root system
$\Delta$ is of $D$ or $E$ type. Then $\{\pm 1 \}\subset G$ and let
$H\subset \SO(3)$ be the quotient
\[
H=G/\{\pm 1 \}.
\]
The induced action of $H$ on $\PP^{1}\cong S^{2}$ is by
rotations. Indeed, $H$ is the symmetry group of a regular polyhedral
decomposition of $S^{2}$ which is given by the platonic solids in the
$E_{n}$ case and the decomposition into two hemispherical $(n-2)$-gons
in the $D_{n}$ case. $H$ is generated by rotations of order $\varp$,
$\varq$, $\varr$, obtained by rotating about the center of an edge, a
face, or a vertex respectively. The group $H$ has the following presentation:
\[
H=\{\left\langle a,b,c \right\rangle :\quad a^{\varp} = b^{\varq} =
c^{\varr} = abc=1 \}.
\]

Let $M=|H|$ be the order of $H$ and let $E,F,V$ be the number of
edges, faces, and vertices respectively. Then
\[
M=\varp E = \varq F = \varr V
\]
and since the stabilizer of an edge is always of order 2 we have $\varp =2$
and so $M=2E$. Then since $F+V-E=2$ we find
\[
E+F+V = 2 + M
\]

We summarize this information below:

\begin{center}
\begin{tabular}{|c|c|c|c|c|}
\hline
Type &	$H$ &	$M$&	$(\varp ,\varq ,\varr )$ &	$(E,F,V)$\\ \hline \hline 
%$A_{n}$     & cyclic & $n+1$ & $(1,n+1,n+1)$& $(n+1,1,1)$	\\ \hline
$D_{n}$     & dihedral & $2n-2$ & $(2,n-2,2)$& $(n-1,2,n-1)$	\\ \hline
$E_{6}$     & tetrahedral & $12$ & $(2,3,3)$& $(6,4,4)$	\\ \hline
$E_{7}$     & octahedral & $24$ & $(2,3,4)$& $(12,8,6)$	\\ \hline
$E_{8}$     & icosohedral & $60$ & $(2,3,5)$& $(30,20,12)$	\\ \hline
\end{tabular}
\end{center}

\bigskip

Now let $\X$ be the quotient stack 
\[
\X=[\CC^{2}/ \{\pm 1 \}]
\]
and let
\[
Y\cong \operatorname{Tot}(K_{\PP^{1}})
\]
be the minimal resolution of the singular space $X=\CC^{2}/\{\pm 1
\}$.

The quotient stack $[\PP^{1}/H]$ has three stacky points with
stabilizers of order $\varp ,\varq ,\varr$, and consequently the stack
quotient $[Y/H]$ has three orbifold points locally of the form
$[\CC^{2}/\ZZ_{\vara }]$ for $\vara \in \{\varp ,\varq ,\varr \}$.

We observe that 
\[
 [\CC^{2}/G ] \cong [\X /H ]
\]
and consequently
\[
\Hilb^{n}(\CC^{2})^{G}  \cong \Hilb^{n}(\X )^{H} .
\]
The scheme $ \Hilb^{n}(\X )$ decomposes into components
$\Hilb^{m_{0},m_{1}}(\X )$ with $n=m_{0}+m_{1}$ where the
corresponding $\{\pm 1 \}$ invariant subschemes $Z\subset \CC^{2}$
have the property that as a $\{\pm 1 \}$-representation,
$H^{0}(\O_{Z})$ has $m_{0}$ copies of the trivial representation and
$m_{1}$ copies of the non-trivial representation.

\begin{proposition}\label{prop: Hilb(X,m0,m1) = Hilb(Y,m0-(m0-m1)^2)}
$\Hilb^{m_{0},m_{1}}(\X )^{H}$ is deformation equivalent to and hence
diffeomorphic to $\Hilb^{m_{0}-(m_{0}-m_{1})^{2}}(Y)^{H}$. In
particular
\[
e\left(\Hilb^{m_{0},m_{1}}(\X )^{H} \right)
=e\left(\Hilb^{m_{0}-(m_{0}-m_{1})^{2}}(Y)^{H} \right). 
\]
\end{proposition}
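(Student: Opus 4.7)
The plan is to leverage the derived McKay equivalence $\Phi: D^{b}(\X) \to D^{b}(Y)$ of Bridgeland-King-Reid (a case originally settled by Kapranov-Vasserot for $\{\pm 1\} \subset SU(2)$). This equivalence is $H$-equivariant because both $\X$ and $Y$ carry natural $H$-actions arising from the action of $G$ on $\CC^{2}$ (with $\{\pm 1\}$ central in $G$), and $\Phi$ is defined by a canonical universal complex compatible with these actions. The strategy is to use $\Phi$ to transport points of $\Hilb^{m_{0},m_{1}}(\X)^{H}$ to points of a moduli space on $Y$, and then to identify this moduli space, up to deformation equivalence, with an ordinary Hilbert scheme of points on $Y$.

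First, I would compute the numerical K-theory class of $\Phi(\O_{Z})$ for $Z$ representing a point of $\Hilb^{m_{0},m_{1}}(\X)^{H}$. Under $\Phi$, the two skyscrapers at the stacky origin of $\X$ (in the trivial and sign representations of $\{\pm 1\}$) correspond to explicit objects on $Y$ supported on the exceptional curve $C \cong \PP^{1}$, spanned in $K_{0}(Y)$ by $[\O_{C}]$ and $[\O_{C}(-1)][1]$. Writing $[\O_{Z}]$ in $K_{0}(\X) \cong R(\{\pm 1\})$ and applying $\Phi$ additively, the Euler form computation on $Y$, governed by the self-intersection $C \cdot C = -2$, yields a zero-dimensional class of length $n' = m_{0} - (m_{0}-m_{1})^{2}$; the quadratic correction is precisely the contribution of the $(m_{0}-m_{1})$-multiplicity along $[\O_{C}(-1)]$ to its self-pairing.

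Next, I would identify the moduli of $\Phi$-images of $(m_{0},m_{1})$-subschemes with $\Hilb^{n'}(Y)$ up to deformation equivalence. Both sides can be realized as Nakajima quiver varieties for the affine $\tilde{A}_{1}$ quiver with the same dimension vector but distinct stability parameters $\zeta$. Varying $\zeta$ along a real-analytic path in the parameter space that avoids the codimension-three wall set produces a smooth family of hyperkähler manifolds whose members are pairwise diffeomorphic. Because the $H$-action on each moduli space commutes with the hyperkähler structure and varies continuously in the family, passing to $H$-fixed loci preserves the diffeomorphism and yields the claimed deformation equivalence between $\Hilb^{m_{0},m_{1}}(\X)^{H}$ and $\Hilb^{n'}(Y)^{H}$. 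The Euler characteristic identity then follows at once.

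The main obstacle will be the K-theoretic length computation: fixing conventions so that the formula $n' = m_{0} - (m_{0}-m_{1})^{2}$ emerges with the correct signs, and verifying that the wall-crossing lands on the ordinary Hilbert scheme $\Hilb^{n'}(Y)$ rather than on some intermediate moduli space of stable objects. A useful consistency check is that $(m_{0},m_{1}) = (m,m)$ gives $n' = m$ (as expected from generic free orbits of length $2m$ mapping to $m$ points on $Y$ away from $C$), while $(m_{0},m_{1}) = (1,0)$ gives $n' = 0$ (the stacky origin goes to the empty subscheme, matching $\Phi(\O_{B\{\pm 1\}}) \sim [\O_{C}]$), and cases where the formula is negative correspond to both Hilbert schemes being empty.
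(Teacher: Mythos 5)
Your proposal follows essentially the same route as the paper: the derived McKay equivalence $D^{b}(\X)\to D^{b}(Y)$, the $K$-theoretic computation with $C^{2}=-2$ producing $n'=m_{0}-(m_{0}-m_{1})^{2}$, and deformation equivalence via Nakajima quiver varieties for the affine $A_{1}$ quiver with varying stability parameter (Nakajima's wall-avoidance argument). The only notable difference is at the last step: rather than arguing that the equivariant family of quiver varieties restricts to a family of $H$-fixed loci, the paper realizes $\Hilb^{m_{0},m_{1}}(\X)^{H}$ and $\Hilb^{n'}(Y)^{H}$ directly as Nakajima quiver varieties for the Dynkin quiver of $G$ (with the same dimension vector and different stabilities) and applies Nakajima's deformation result to those, which sidesteps the need to justify that taking $H$-fixed points commutes with the deformation.
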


\begin{proof}

It is known that both $\Hilb^{m_{0},m_{1}}(\X )$ and $\Hilb^{n}(Y)$
can be realised as moduli spaces of quiver representations of the
$A_{1}$ Nakajima quiver variety with dimension vectors $(m_{0},m_{1})$
and $(n,n)$ respectively and framing vector $(1,0)$ for both (see
\cite[Proposition~5.2]{craw2021quot} and \cite{Kuznetsov}). By the
dimension formula for quiver varieties \cite[Equation~(2.6)]{Nakajima1994Duke}
(see also Equation~\eqref{eqn: dimension formula for quiver
varieties}), $\dim \Hilb^{m_{0},m_{1}}(\X )=2n$ where
$n=m_{0}-(m_{1}-m_{0})^{2}$. Consequently, both
$\Hilb^{m_{0},m_{1}}(\X )$ and $\Hilb^{n}(Y)$ are crepant projective
resolutions of $\Sym^{n}(\CC^{2}/\{\pm 1 \})$. By \cite[Corollary
1.3]{bellamy2020birational}, any two projective crepant resolutions of
$\Sym^{n}(\CC^{2}/\{\pm 1 \})$ can be realized as moduli spaces of
quiver representations with the \emph{same} dimension and framing
vectors (namely $(n,n)$ and $(1,0)$) but with different stability
conditions. Then by Nakajima \cite[Corollary~4.2]{Nakajima1994Duke},
$\Hilb^{m_{0},m_{1}}(\X )$ is deformation equivalent to, and hence
diffeomorphic to $\Hilb^{m_{0}-(m_{1}-m_{0})^{2}}(Y)$. Moreover, as
$H$ acts symplectically on both resolutions, the hyperk\"ahler
reduction providing the deformation equivalence in the proof of
\cite[Corollary~4.2]{Nakajima1994Duke} can be performed $H$-equivariantly to
obtain a deformation equivalence between the $H$-fixed points of the
two resolutions.
\end{proof}

Let
\[
j=m_{1}-m_{0}, \quad n=m_{0}-(m_{0}-m_{1})^{2} 
\quad \text{so that}
\quad
m_{0}+m_{1} = 2n +j + 2j^{2}. 
\]
We then can compute:
\[
q^{\frac{1}{24}} Z_{\Delta}(q) = \sum_{m_{0},m_{1}= 0}^{\infty } 
e\left(\Hilb^{m_{0},m_{1}}(\X )^{H} \right) \, q^{m_{0}+m_{1}} 
= \sum_{j\in \ZZ}\,  \sum_{n=0}^{\infty} e\left(\Hilb^{n}(Y)^{H}
\right)\, q^{2n+j+2j^{2}}.
\]

The following identity follows easily from the Jacobi triple product
formula:
\[
\sum_{j\in \ZZ} q^{2j^{2}+j+\frac{1}{8}} = \frac{\eta^{2}(2\tau )}{\eta (\tau )}.
\]

Substituting this into the previous equation multiplied by
$q^{\frac{1}{8}}$ we find
\[
q^{\frac{1}{6}} Z_{\Delta}(q) =  \frac{\eta^{2}(2\tau )}{\eta (\tau )} \cdot
\sum_{n=0}^{\infty}  e\left(\Hilb^{n}(Y)^{H}
\right)\, q^{2n}.
\]

We can now compute the summation factor in the above equation by the
same method we used to compute the global series in Section~\ref{sec:
the global series}. Here we use the fact that the singularities of
$Y/H$ are all of type $A$ and we have already proven our formula for
the local series in the $A_{n}$ case. Indeed, the quotient $[Y/H]$ has
three stacky points with stabilizers $\ZZ_{\varp}$, $\ZZ_{\varq}$, and
$\ZZ_{\varr}$ and the complement of those points $(Y/H)^{o}$ has Euler
characteristic $-1$. Proceeding then by the same argument we used in
Section~\ref{sec: the global series} to get Equation~\eqref{eqn:
stratification formula for sum e(hilb(X)G)}, we obtain
\begin{align*}
\sum_{n=0}^{\infty}  e\left(\Hilb^{n}(Y)^{H}\right)\, q^{2n}&=
\left(\sum_{n=0}^{\infty} e\left(\Hilb^{n}\left((Y/H)^{o} \right)
\right) q^{2Mn} \right)  \cdot \prod_{\vara \in \{\varp ,\varq
,\varr  \}}\, \left( \sum_{n=0}^{\infty} e\left(\Hilb_{0}^{n}(\CC^{2})^{\ZZ_{\vara }} \right) q^{\frac{2Mn}{\vara }}  \right)\\
&=\prod_{m=1}^{\infty} \frac{\left(1-q^{2Mn} \right)}{\left(1-q^{\frac{2Mn}{\varp}} \right)\left(1-q^{\frac{2Mn}{\varq}} \right)\left(1-q^{\frac{2Mn}{\varr}} \right)}\\
&=\prod_{m=1}^{\infty} \frac{\left(1-q^{4En} \right)}{\left(1-q^{2En} \right)\left(1-q^{2Fn} \right)\left(1-q^{2Vn} \right)}\\
&=q^{\frac{1}{24}(-2E+2F+2V)}\cdot  \frac{\eta (4E\tau )}{\eta (2E\tau )\eta
(2F\tau )\eta (2V\tau )} \\
&=  \frac{q^{\frac{1}{6}}\,\eta (4E\tau )}{\eta (2E\tau )\eta
(2F\tau )\eta (2V\tau )}.
\end{align*}
Substituting into the previous equation and cancelling the factors of
$q^{\frac{1}{6}}$, we have thus proved
\[
Z_{\Delta}(q) =  \frac{\eta^{2}(2\tau )}{\eta (\tau )}\cdot \frac{\eta (4E\tau )}{\eta
(2E\tau )\eta (2F\tau )\eta (2V\tau )},
\]
which completes the proof of Theorem~\ref{thm: formula for local
series (in intro)} in the general case. \qed

\section{The local partition function as a theta function via Nakajima}\label{sec: local partition functions}

The local partition functions $Z_{\Delta}(q)$ considered in this paper
are obtained from a specialization of the partition functions of the
stack $[\CC^{2} /G_{\Delta}]$.  Using the work of Nakajima
\cite{nakajima2002geometric}, the partition function of the Euler
characteristics of the Hilbert scheme of points on the stack quotient
$[\CC^{2}/G_{\Delta}]$ was computed explicitly in
\cite{gyenge2015euler} in terms of the root data of $\Delta$.  We use
this to express $Z_{\Delta}(q)$ in terms of $\theta_{\Delta}(\tau )$,
a shifted theta function for the root lattice of $\Delta$. As a
byproduct we obtain an eta product formula for the associated shifted
theta function (Theorem~\ref{thm: eta product for theta function}).

A zero-dimensional substack $Z\subset [\CC^{2}/G_{\Delta}]$ may be
regarded as a $G_{\Delta}$ invariant, zero-dimensional subscheme of
$\CC^{2}$. Consequently, we may identify the Hilbert scheme of points
on the stack $[\CC^{2}/G_{\Delta}]$ with the $G_{\Delta}$ fixed locus
of the Hilbert scheme of points on $\CC^{2}$: 
\[
\Hilb \left([\CC^{2}/G_{\Delta}] \right) = \Hilb
(\CC^{2})^{G_{\Delta}} .
\]

This Hilbert scheme has components indexed by representations $\rho$
of $G_{\Delta}$ as follows
\begin{equation*}
\Hilb^{\rho} \left([\CC^{2}/G_{\Delta}] \right) = \left\{ Z\subset
\CC^{2}, \text{ $Z$ is $G_{\Delta}$ invariant and $H^{0}(\O_{Z})\cong
\rho $} \right\}.
\end{equation*}

Let $\{\rho_{0},\dotsc ,\rho_{n} \}$ be the irreducible
representations of $G_{\Delta}$ where $\rho_{0}$ is the trivial
representation. We note that $n$ is also the rank of $\Delta$. We
define
\[
Z_{[\CC^{2}/G_{\Delta}]} (q_{0},\dotsc ,q_{n}) = \sum_{m_{0},\dotsc
,m_{n}=0}^{\infty} e\left(\Hilb^{m_{0}\rho_{0}+\dotsb
+m_{n}\rho_{n}}([\CC^{2}/G_{\Delta}]) \right) q_{0}^{m_{0}}\dotsb
q_{n}^{m_{n}} .
\]

Recall that our local partition function $Z_{\Delta}(q)$ is defined by
\[
Z_{\Delta}(q) = \sum_{n=0}^{\infty}
e\left(\Hilb^{n}(\CC^{2})^{G_{\Delta}} \right) q^{n-\frac{1}{24}}. 
\]
We then readily see that
\[
Z_{\Delta}(q) = q^{\frac{-1}{24}}\cdot  Z_{[\CC^{2}/G_{\Delta}]}
(q_{0},\dotsc ,q_{n})|_{q_{i}=q^{d_{i}}}
\]
where
\[
d_{i} =\dim \rho_{i}.
\]

The following formula is given explicitly in \cite[Theorem~1.3]{gyenge2015euler}, but its content is already present in the work of Nakajima \cite{nakajima2002geometric}:
\begin{theorem} \label{thm: Zorbifold formula}
Let $C_{\Delta}$ be the Cartan matrix of the root system $\Delta$,
then 
\[
Z_{[\CC^{2}/G_{\Delta}]} (q_{0},\dotsc ,q_{n}) = \prod_{m=1}^{\infty}
(1-Q^{m})^{-n-1} \cdot \sum_{\mvec \in \ZZ^{n}} q_{1}^{m_{1}}\dotsb
q_{n}^{m_{n}} \cdot Q^{\half \mvec^{\top}\cdot C_{\Delta}\cdot \mvec}
\]
where $Q=q_{0}^{d_{0}}q_{1}^{d_{1}}\dotsb q_{n}^{d_{n}}$.
\end{theorem}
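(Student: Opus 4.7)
The strategy is to recognize the right-hand side as a character of an affine Kac-Moody representation and to prove the identity via Nakajima's geometric realization of such representations on the cohomology of quiver varieties.

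First, I would identify each component $\Hilb^{m_0\rho_0+\dotsb+m_n\rho_n}([\CC^2/G_\Delta])$ with a Nakajima quiver variety $\mathfrak{M}(\mvec,w_0)$ for the affine Dynkin diagram $\widetilde{\Delta}$, with dimension vector $\mvec=(m_0,\dotsc,m_n)$ and framing of multiplicity one at the affine node (corresponding to the trivial representation $\rho_0$). This is the McKay-correspondence form of the equivariant Hilbert scheme, spelled out for our setting in \cite{Kuznetsov} and used implicitly in Section~\ref{sec: McKay correspondence}. Since each variety is smooth with proper $\CC^\times$-fixed locus, Euler characteristics equal sums of Betti numbers.

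Second, I would invoke Nakajima's theorem \cite{nakajima2002geometric}: the total cohomology $\bigoplus_{\mvec} H^*\!\left(\mathfrak{M}(\mvec,w_0),\CC\right)$ carries a canonical integrable $\widehat{\mathfrak{g}}(\Delta)$-module structure realizing (a derivation-enhanced version of) the basic level-one representation $L(\Lambda_0)$. With $q_i$ tracking $m_i$, the partition function $Z_{[\CC^2/G_\Delta]}(q_0,\dotsc,q_n)$ then becomes the specialization of the formal character of this module in which the exponential $e^{\alpha_i}$ of each affine simple root $\alpha_i$ is set to $q_i$.

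Third, I would evaluate this character via the Frenkel-Kac vertex operator construction, which presents $L(\Lambda_0)$ as $\CC[M_\Delta]\otimes\mathrm{Fock}$. The lattice factor contributes the theta sum $\sum_{\mvec\in\ZZ^n} q_1^{m_1}\dotsb q_n^{m_n}\,Q^{\half\mvec^{\top}C_\Delta\mvec}$, using $(\alpha,\alpha)=\mvec^{\top}C_\Delta\mvec$ for a finite root $\alpha=\sum m_i\alpha_i$; the Fock factor (including the extra loop boson coming from the derivation in step two) contributes $\prod_{m\ge 1}(1-Q^m)^{-(n+1)}$ with $Q=e^{-\delta}=q_0^{d_0}\dotsb q_n^{d_n}$ encoding the null root $\delta=d_0\alpha_0+\dotsb+d_n\alpha_n$.

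The main obstacle is the bookkeeping of step three: correctly identifying $Q$ with the null root under the dimension-vector-to-weight dictionary, accounting for the $\Lambda_0$-shift at the highest-weight vector, and pinning down the exponent $n+1$ as the sum of the finite Cartan rank $n$ and the rank-one loop direction. Once this dictionary is set, the stated identity follows by direct comparison of the two expressions for the character.
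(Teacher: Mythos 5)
The paper does not actually prove this theorem: it quotes the formula from \cite[Thm~1.3]{gyenge2015euler}, noting that its content is already in Nakajima's work, and your outline (equivariant Hilbert scheme components as Nakajima quiver varieties with framing $\wvec=(1,0,\dotsc,0)$, the generating function as a character, Frenkel--Kac to produce the theta sum over $M_\Delta$ and the eta-type product) is exactly the argument behind that citation, so it is essentially the same approach. One small correction to your bookkeeping: the extra factor raising the exponent from $n$ to $n+1$ does not come from the derivation (which only supplies the $\delta$-grading) but from the fact that the \emph{full} cohomology $\bigoplus_{\vvec}H^{*}(\M (\vvec ,\wvec ))$ realizes $L(\Lambda_0)$ tensored with a rank-one Fock space (equivalently, the basic representation of the $\widehat{\mathfrak{gl}}$-type extension), rather than $L(\Lambda_0)$ alone; with that adjustment your dictionary $v_{i}=m_{i}+\half (\mvec |\mvec )d_{i}$, $Q=e^{-\delta}$, reproduces the stated identity.
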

We note that under the specialization $q_{i}=q^{d_{i}}$, 
\[
Q=q^{d_{0}^{2}+\dotsb +d_{n}^{2}} = q^{k}
\]
where $k=|G|$ is the order of the group $G$.

We then obtain
\[
Z_{\Delta}(q) = q^{\frac{-1}{24}}\cdot
\prod_{m=1}^{\infty}(1-q^{km})^{-n-1} \cdot \sum_{\mvec \in \ZZ^{n}}
q^{\mvec^{\top}\cdot \dvec} \cdot q^{\frac{k}{2}\mvec^{\top}\cdot C_{\Delta}\cdot \mvec}
\]
where $\dvec =(d_{1},\dotsc ,d_{n})$.

Let $M_{\Delta}$ be the root lattice of $\Delta$ which we identify
with $\ZZ^{n}$ via the basis given by $\alpha_{1},\dotsc ,\alpha_{n}$,
the simple positive roots of $\Delta$. Under this identification, the
standard Weyl invariant bilinear form is given by
\[
(\uvec |\vvec ) = \uvec^{\top}\cdot C_{\Delta}\cdot \vvec 
\]
and $\dvec$ is identified with the longest root. 
We define
\[
\zetavec = C_{\Delta}^{-1} \cdot \dvec 
\]
so that 
\[
\mvec^{\top}\cdot \dvec = \mvec^{\top}\cdot C_{\Delta} \cdot \zetavec
= (\mvec |\zetavec ).
\]
We may then write
\begin{align*}
Z_{\Delta}(q)& = q^{\frac{-1}{24}}\cdot
\prod_{m=1}^{\infty}(1-q^{km})^{-n-1}\cdot \sum _{\mvec \in M_{\Delta}} q^{(\mvec |\zetavec )+\frac{k}{2}(\mvec |\mvec)}\\
&= q^{A} \cdot \left(q^{\frac{k}{24}}\prod_{m=1}^{\infty}(1-q^{km})
\right)^{-n-1} \cdot \sum_{\mvec \in M_{\Delta}} q^{\frac{k}{2}\left(\mvec +\frac{1}{k}\zetavec |\mvec +\frac{1}{k}\zetavec  \right)}\\
& = q^{A}\cdot  \eta (k\tau )^{-n-1}\cdot  \theta_{\Delta} (\tau )
\end{align*}
where 
\[
A \ = \ \frac{-1}{24} + \frac{k(n+1)}{24} - \frac{1}{2k}(\zetavec
|\zetavec )
\]
and $\theta_{\Delta}(\tau )$ is the shifted theta function:
\begin{equation}\label{eqn: defn of shifted theta function}
\theta_{\Delta}(\tau ) = \sum_{\mvec \in M_{\Delta}} q^{\frac{k}{2}\left(\mvec +\frac{1}{k}\zetavec |\mvec +\frac{1}{k}\zetavec  \right)}
\end{equation}
where as throughout this paper we have identified $q=\exp\left(2\pi i\tau  \right)$.

In Appendix~\ref{sec: another strange formula}, we will prove the
following formula which for $\Delta =A_{n}$ coincides with the
``strange formula'' of Freudenthal and de Vries
\cite{freudenthal1969linear}:
\[
 \frac{k(n+1)-1}{24}  = \frac{(\zetavec |\zetavec )}{2k}.
\]
It follows that $A=0$ and we obtain the following:
\begin{lemma}\label{lem: local series as theta/eta} The local series
$Z_{\Delta}(q)$ is given by
\[
Z_{\Delta}(q) = \frac{\theta_{\Delta}(\tau )}{\eta (k\tau )^{n+1}}. 
\]
\end{lemma}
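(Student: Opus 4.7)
The entire derivation leading to
\[
Z_\Delta(q) = q^A \cdot \eta(k\tau)^{-(n+1)} \cdot \theta_\Delta(\tau), \quad A = -\tfrac{1}{24} + \tfrac{k(n+1)}{24} - \tfrac{(\zetavec|\zetavec)}{2k},
\]
is already laid out immediately above the lemma statement, so my plan is to repackage that derivation cleanly and then close the remaining arithmetic gap.

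Concretely, I would first invoke Theorem~\ref{thm: Zorbifold formula} and specialize $q_i = q^{d_i}$; using the standard representation-theoretic identity $\sum_{i=0}^n d_i^2 = |G_\Delta| = k$ converts $Q$ into $q^k$. Next I would identify $\ZZ^n$ with the root lattice $M_\Delta$ via the simple roots, under which $\mvec^\top C_\Delta \mvec = (\mvec|\mvec)$ and, by the definition $\zetavec = C_\Delta^{-1}\dvec$, the linear piece $\mvec^\top \dvec$ becomes $(\mvec|\zetavec)$. Completing the square
\[
(\mvec|\zetavec) + \tfrac{k}{2}(\mvec|\mvec) = \tfrac{k}{2}\bigl(\mvec + \tfrac{1}{k}\zetavec \,\big|\, \mvec + \tfrac{1}{k}\zetavec\bigr) - \tfrac{(\zetavec|\zetavec)}{2k}
\]
isolates precisely the shifted theta function $\theta_\Delta(\tau)$ defined in \eqref{eqn: defn of shifted theta function}, and folding a factor of $q^{k(n+1)/24}$ into the infinite product to produce $\eta(k\tau)^{-(n+1)}$ yields the displayed expression for $Z_\Delta(q)$ above.

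The lemma then reduces to the single assertion $A = 0$, equivalently the ADE strange formula
\[
\frac{k(n+1) - 1}{24} = \frac{(\zetavec|\zetavec)}{2k}.
\]
This is the main obstacle, and I would defer its verification to Appendix~\ref{sec: another strange formula}. For $\Delta = A_n$ one has $k = n+1$ and $\zetavec$ is essentially the Weyl vector $\rho$ (reflecting the self-duality of the $A_n$ root lattice), so the identity reduces to the classical Freudenthal--de Vries strange formula. For $D_n$ and $E_n$ I would either attempt a uniform argument expressing $\zetavec$ in terms of $\rho$ via the highest root and the Coxeter number of the simply-laced system, or, if no clean uniform proof presents itself, verify the identity directly in the four remaining families $D_n$, $E_6$, $E_7$, $E_8$ using the explicit values of $k$, $n$, and $(\zetavec|\zetavec)$ computed from tabulated Cartan matrix inverses and highest-root coordinates.
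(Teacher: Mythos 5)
Your proposal is correct and follows essentially the same route as the paper: specialize Theorem~\ref{thm: Zorbifold formula} at $q_i = q^{d_i}$, identify the quadratic form with the Weyl-invariant pairing on the root lattice, complete the square to produce $\theta_\Delta(\tau)$ and $\eta(k\tau)^{-n-1}$, and reduce the remaining prefactor $q^A$ to the ADE strange formula. The paper likewise proves that identity in Appendix~\ref{sec: another strange formula} by reducing the $A_n$ case to Freudenthal--de Vries (using $d_i=1$ to get $\zetavec = \rhovec$) and checking $D_n$, $E_6$, $E_7$, $E_8$ by direct computation, exactly as you suggest as your fallback.
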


\section{Proof of Theorem~\ref{thm: Formula for Zbir}}
\label{sec: proof of thm about Zbir}

Let $\Zcal  =[X/G]$ be the quotient stack of $X$ by $G$ and let $Y\to
X/G$ be the minimal resolution. 
The Hilbert scheme of zero dimensional substacks of $\Zcal$ is
naturally identified with the $G$-fixed Hilbert scheme of $X$:
\[
\Hilb (\Zcal )\cong \Hilb (X)^{G}.
\]
We emphasize that $\Hilb (\Zcal )$ is itself a scheme, not just a
stack, as the objects it parameterizes (substacks $V\subset \Zcal $)
do not have automorphisms (see \cite{Olsson-Starr} or
\cite[\S~2.3]{Bryan-Cadman-Young}). Components of $\Hilb (\Zcal )$ are
indexed by the numerical $K$-theory class of $\O_{Z}$ for $Z\subset
\Zcal$. The $K$-theory class of $\O_{Z}$ can be written in a basis for
$K$-theory as follows:
\[
[\O_{Z}] = n[\O_{p}] + \sum_{i=1}^{r} \sum_{j=1}^{n(i)}
m_{j}(i)[\O_{p_{i}}\otimes \rho_{j}(i)] 
\]
where $p\in \Zcal$ is a generic point and $p_{1},\dotsc ,p_{r}\in \Zcal$
are the orbifold points. The local group of $\Zcal$ at $p_{i}$ is
$G_{\Delta (i)}\subset \SU(2)$  and has corresponding root system
$\Delta (i)$ of rank $n(i)$, and has irreducible representations
$\rho_{0}(i),\rho_{1}(i),\dotsc ,\rho_{n(i)}(i)$ where $\rho_{0}(i)$
is the trivial representation. 
We note that we do not need to include $ [\O_{p_{i}}\otimes
\rho_{0}(i)]$ in our basis for $K$-theory because of the following
relation in  $K$-theory which holds for all $i$:

\begin{equation}\label{eqn: Op = O0timesrhoreg}
[\O_{p}] = [\O_{p_{i}}\otimes \rho_{\reg}(i)]
\end{equation}
where $\rho_{\reg}(i)$ is the regular representation of
$G_{\Delta(i)}$.

We abbreviate the data $\left\{m_{j}(i) \right\}$ appearing in the
$K$-theory class above by the symbol $\mdata$ and we denote by
\[
\Hilb^{n,\mdata}(\Zcal ) \subset \Hilb (\Zcal )
\]
the corresponding component. Let
\[
D_{\mdata} = \sum_{i=1}^{r} \sum_{j=1}^{n(i)} \, m_{j}(i)\, E_{j}(i)
\]
where
$E_{1}(i),\dotsc ,E_{n(i)}(i)$ are
the exceptional curves over $p_{i}$. We can organize the data $\mdata = \left\{m_{j}(i) \right\}$ into
$\mvec (i)\in M_{\Delta (i)}$, \emph{i.e.}  the vectors in the root lattice
of $\Delta (i)$ having components $ m_{1}(i),\dotsc
,m_{n(i)}(i)$. Under this identification
\[
D_{\mdata}^{2} = -\sum_{i=1}^{r} \left(\mvec (i)|\mvec (i)
\right)_{\Delta (i)} 
\]
since the intersection form of the exceptional curves over $p_{i}$ is
the negative of the corresponding Cartan matrix $C_{\Delta (i)}$.

\begin{proposition}\label{prop: Hilb(Z) is birational to Hilb(Y)}
	$\Hilb^{n,\mdata}(\Zcal)$ is birational to $\Hilb^{n+\half
		D_{\mdata}^{2}}(Y)$. 
\end{proposition}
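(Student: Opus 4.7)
The plan is to globalize the argument from the proof of Proposition~\ref{prop: Hilb(X,m0,m1) = Hilb(Y,m0-(m0-m1)^2)}, using the derived McKay correspondence together with a line bundle twist to identify $\Hilb^{n,\mdata}(\Zcal)$ and $\Hilb^{N}(Y)$ (with $N = n + \tfrac{1}{2} D_{\mdata}^{2}$) as open substacks of a common moduli stack of complexes. First I would invoke the Bridgeland--King--Reid theorem to produce a Fourier--Mukai equivalence $\FM : D^{b}(\Zcal) \to D^{b}(Y)$; this applies because $Y = G\text{-Hilb}(X)$ coincides with the minimal resolution of $X/G$ for ADE quotient singularities on a surface. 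Since both $\Zcal$ and $Y$ are already proper there is no need to compactify, as was done in Section~\ref{sec: McKay correspondence}.

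Next I would track the Chern character of the image under $\FM$. By the standard McKay formulas applied locally at each orbifold point $p_{i}$, which generalize the $A_{1}$ computations of Section~\ref{sec: McKay correspondence}, one has $\FM([\O_{\Zcal}]) = [\O_{Y}]$, $\FM([\O_{p}]) = [\O_{q}]$ for generic points $p \in \Zcal$, $q \in Y$, and $ch(\FM([\O_{p_{i}} \otimes \rho_{j}(i)])) = (0, -E_{j}(i), 0)$ for $j \geq 1$. Linearity then yields
\[
ch\bigl(\FM([\O_{\Zcal}] - [\O_{Z}])\bigr) = (1,\, D_{\mdata},\, -n).
\]
Tensoring by the line bundle $L = \O_{Y}(-D_{\mdata})$, with $ch(L) = (1, -D_{\mdata}, \tfrac{1}{2} D_{\mdata}^{2})$, yields the Chern character
\[
(1,\, 0,\, -n - \tfrac{1}{2} D_{\mdata}^{2}),
\]
which matches that of an ideal sheaf of $N = n + \tfrac{1}{2} D_{\mdata}^{2}$ points on $Y$.

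Finally, the composition $(- \otimes L) \circ \FM$ gives an equivalence of moduli stacks of complexes identifying $\M^{n,\mdata}(\Zcal)$ with $\M_{(1,0,-N)}(Y)$, and both $\Hilb^{n,\mdata}(\Zcal)$ and $\Hilb^{N}(Y)$ appear inside as the open substacks parameterizing honest ideal sheaves concentrated in degree zero. To extract the birational map I would exhibit a common dense open subset, namely the locus of reduced configurations of $n$ points on the smooth part of $X/G$ together with the prescribed orbifold/exceptional data at the singular points of $X/G$ determined by $\mdata$; this locus is dense in both Hilbert schemes and is preserved by the equivalence, yielding the desired birational map. The main obstacle I anticipate is verifying the Chern character formula $ch(\FM([\O_{p_{i}} \otimes \rho_{j}(i)])) = (0, -E_{j}(i), 0)$ uniformly across all ADE types beyond $A_{1}$, which requires invoking the precise form of the Kapranov--Vasserot (or Bridgeland--King--Reid) equivalence and carefully identifying the images of the orbifold skyscraper sheaves twisted by irreducible representations with the appropriate sheaves supported on the exceptional curves.
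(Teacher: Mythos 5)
Your route is genuinely different from the paper's, which does not use the derived McKay correspondence here at all. The paper first proves (Lemma~\ref{lem:rigidsh}) that for each datum $\mdata$ there is a \emph{unique rigid} substack $Z_{\mdata}\subset\Zcal$ supported at the orbifold points with $K$-theory class $-\half D_{\mdata}^{2}[\O_{p}]+\sum_{i,j} m_{j}(i)[\O_{p_{i}}\otimes\rho_{j}(i)]$; uniqueness comes from the fact that the relevant coefficient in Theorem~\ref{thm: Zorbifold formula} equals $1$, and rigidity from a dimension count on the corresponding Nakajima quiver variety. The birational map is then elementary: the open locus of $\Hilb^{n,\mdata}(\Zcal)$ consisting of $P\cup Z_{\mdata}$ with $P\subset U=\Zcal\setminus\{p_{1},\dotsc,p_{r}\}$ is identified with $\Hilb^{n+\half D_{\mdata}^{2}}(V)$, where $V\subset Y$ is the complement of the exceptional divisors, via the canonical isomorphism $U\cong V$. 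Your Fourier--Mukai argument is instead a globalization of Section~\ref{sec: McKay correspondence} (and is in fact the alternative the authors themselves sketch); your observation that no compactification is needed, the $K$-theory bookkeeping, the twist by $\O_{Y}(-D_{\mdata})$, and the resulting Chern character $(1,0,-n-\half D_{\mdata}^{2})$ are all correct. What the paper's route buys is that it never has to control what the equivalence does to actual objects, only to count points in a zero-dimensional moduli space.

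That said, there is one step you assert without justification, and it is precisely where the real work lies: the claim that your distinguished locus ``is preserved by the equivalence.'' Realizing both Hilbert schemes as open substacks of a common moduli stack $\M_{(1,0,-N)}(Y)$ only yields a birational map if their overlap is nonempty (and dense), i.e.\ if the composite $(-\otimes L)\circ\FM$ carries at least one ideal sheaf $I_{Z}$ on $\Zcal$ to an honest ideal sheaf on $Y$ rather than to a genuine two-term complex. When $\mdata\neq 0$, every $Z$ in $\Hilb^{n,\mdata}(\Zcal)$ necessarily has nontrivial components at the orbifold points, so you cannot arrange $Z$ to avoid them, and the Fourier--Mukai image of the orbifold part must be controlled explicitly --- matching Chern characters does not suffice. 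Supplying this is exactly the role played by the rigid substack $Z_{\mdata}$ of Lemma~\ref{lem:rigidsh} (one checks the image of $I_{Z_{\mdata}}$, or argues as in the $A_{1}$ case of Proposition~\ref{prop: Hilb(X,m0,m1) = Hilb(Y,m0-(m0-m1)^2)} at each $p_{i}$). Without some version of this step your argument produces two open substacks whose intersection has not been shown to be nonempty, and the proposed proof is incomplete at that point.
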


To prove this we will first need the following lemma.
\newpage

\begin{lemma}\label{lem:rigidsh}
\leavevmode
\begin{enumerate} \item Let $\Delta$ be a rank $n$ ADE root
system, let $M_{\Delta }$ be the corresponding root lattice, and let
$G_{\Delta } \subset \SU(2)$ the corresponding finite subgroup. To any
element $\mvec \in M_{\Delta }$ there is a unique rigid
\footnote{By
definition, a substack $Z$ is \emph{rigid} if it corresponds to an
isolated point in the Hilbert scheme. }
substack $Z_{\mvec}\subset [\mathbb{C}^2/G_{\Delta}]$ with $K$-theory
class
\[
\half (\mvec|\mvec)[\O_{p}] + \sum_{j=1}^{n}
m_{j}[\O_{0}\otimes \rho_{j}]
\]
where $p \in [\mathbb{C}^2/G]$ is a generic point.
	
		\item For every datum $\mdata$ there is a unique rigid
substack $Z_{\mdata} \subset \mathcal{Z}$ with $K$-theory
class \begin{gather*}\sum_{i=1}^r
\left(
\frac{1}{2}(\mvec(i)|\mvec(i))_{\Delta (i)}[\O_{p}] + \sum_{j=1}^{n(i)}
m_{j}(i)[\O_{p_i}\otimes \rho_{j}(i)]
 \right)\\
=-\frac{
D_{\mdata}^{2}}{2}[\O_{p}] +\sum_{j=1}^{r} \sum_{j=1}^{n(i)} m_{j}(i)[\O_{p_i}\otimes
\rho_{j}(i)] \end{gather*} where $p \in \Zcal$ is a generic point.
%This subscheme has $\mathrm{colength}(\mathcal{I}_{Z_{\mdata}} \subset \mathcal{O}_{\Zcal})=\mdata -\half |G| D_{\mdata}^{2}$.
	\end{enumerate}
\end{lemma}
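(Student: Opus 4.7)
The plan is to prove both parts by leveraging the derived McKay correspondence $\FM\colon D^{b}([\CC^{2}/G_{\Delta}])\to D^{b}(Y_{\Delta})$ between $[\CC^{2}/G_{\Delta}]$ and its minimal resolution $Y_{\Delta}$, converting the existence and uniqueness of rigid substacks into the existence and uniqueness of rigid sheaves on $Y_{\Delta}$, which I will identify explicitly as structure sheaves of divisors supported on the exceptional set.

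For Part (1), I associate to $\mvec=\sum_{j} m_{j}\alpha_{j}\in M_{\Delta}$ the divisor $D_{\mvec}=\sum_{j}m_{j}E_{j}$ on $Y_{\Delta}$, where $E_{1},\dotsc ,E_{n}$ are the exceptional $(-2)$-curves dual to the simple roots. A direct Chern character computation, using $ch(\O_{D_{\mvec}})=D_{\mvec}-\tfrac{1}{2} D_{\mvec}^{2}=D_{\mvec}+\tfrac{1}{2}(\mvec|\mvec)$ (since the exceptional intersection form is $-C_{\Delta}$), together with the numerical action of $\FM$ on $K$-theory (generalizing the $A_{1}$ computation of Section~\ref{sec: McKay correspondence} via the classical McKay correspondence formulas of \cite{Gonzalez-Sprinberg-Verdier}), matches the numerical class of $\O_{D_{\mvec}}$ with $\tfrac{1}{2}(\mvec|\mvec)[\O_{p}]+\sum_{j} m_{j}[\O_{0}\otimes\rho_{j}]$. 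Next I verify that $\O_{D_{\mvec}}$ is rigid, i.e.\ $\operatorname{Ext}^{1}(\O_{D_{\mvec}},\O_{D_{\mvec}})=0$; using the sequence $0\to \O(-D_{\mvec})\to \O\to \O_{D_{\mvec}}\to 0$ this reduces to vanishings on the exceptional tree that follow from rationality of the $\CC^{2}/G_{\Delta}$ singularity. Since $\FM$ is an equivalence, $\FM^{-1}(\O_{D_{\mvec}})$ is a rigid object of $D^{b}([\CC^{2}/G_{\Delta}])$ which, after verifying concentration in degree zero, is the structure sheaf of the required rigid substack $Z_{\mvec}$. Uniqueness is immediate: any other rigid $Z_{\mvec}'$ with the same $K$-class pushes forward under $\FM$ to a rigid sheaf on $Y_{\Delta}$ with the same Chern character, forcing $\FM(\O_{Z_{\mvec}'})\cong \O_{D_{\mvec}}$ and hence $Z_{\mvec}'=Z_{\mvec}$.

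For Part (2), I argue locally. The orbifold points $p_{1},\dotsc ,p_{r}\in \Zcal$ are pairwise disjoint, so every zero-dimensional substack of $\Zcal$ splits as a disjoint union of pieces supported near each $p_{i}$, and near $p_{i}$ the stack $\Zcal$ is locally $[\CC^{2}/G_{\Delta (i)}]$. Using the relation $[\O_{p}]=\sum_{j} \dim \rho_{j}(i)\,[\O_{p_{i}}\otimes \rho_{j}(i)]$ from \eqref{eqn: Op = O0timesrhoreg}, I rewrite the global $K$-class as a sum of local $K$-classes, one at each $p_{i}$, and then invoke Part (1) at each $p_{i}$ applied to the datum $(\mvec (i),\Delta (i))$ to produce a unique local rigid substack $Z_{\mvec(i)}\subset [\CC^{2}/G_{\Delta (i)}]$. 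The disjoint union $Z_{\mdata}=\bigsqcup_{i=1}^{r}Z_{\mvec(i)}$ is the required global rigid substack with the prescribed $K$-class.

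The main technical obstacle will be the case when $\mvec$ has components of mixed sign, so $D_{\mvec}$ is not effective and $\O_{D_{\mvec}}$ is not literally a coherent sheaf. I will handle this either by interpreting $\O_{D_{\mvec}}$ as an object of $D^{b}(Y_{\Delta})$ or, more usefully, by replacing it with a twist $\O_{D_{\mvec}+D'}\otimes \O_{Y_{\Delta}}(-D')$ for some effective $D'$ supported on the exceptional set, and then verifying that the resulting rigid object transports under $\FM^{-1}$ to the structure sheaf of an honest substack, i.e.\ that it sits in degree zero and has nonnegative $\rho_{j}$-multiplicities in its global sections. Establishing this positivity and the degree-zero concentration uniformly in $\mvec$ is the delicate point of the argument.
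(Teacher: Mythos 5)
Your reduction of Part (2) to Part (1) is exactly the paper's (take the disjoint union of local rigid substacks at the $p_i$), but your proof of Part (1) has two genuine gaps, only one of which you flag. The flagged one is real and not a technicality: for general $\mvec\in M_{\Delta}$ the divisor $D_{\mvec}$ is not effective, and even after a twisting fix you must show that the resulting rigid object of $D^{b}(Y_{\Delta})$ transports under $\FM^{-1}$ to the structure sheaf of an honest substack. The derived McKay equivalence only identifies the two Hilbert schemes with \emph{open} substacks of a common moduli stack of complexes --- this is precisely why Section~3 of the paper obtains only a birational map $\Hilb^{m_0,m_1}(\X)\dashrightarrow \Hilb^{n}(Y)$ and then has to invoke Nakajima's deformation-equivalence theorem --- and a birational statement is useless here, where the whole point is to pin down a single point of a specific component exactly. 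The unflagged gap is in your uniqueness step: ``rigid with the same Chern character $\Rightarrow$ isomorphic'' is not a valid inference for sheaves on a surface, and in any case rigidity of $Z$ as a point of the Hilbert scheme is controlled by $\Hom(I_Z,\O_Z)$, not by $\operatorname{Ext}^{1}(\O_Z,\O_Z)$, which is what $\FM$ preserves; so even granting the transport you have not excluded other substacks (rigid or otherwise) in the same component. A smaller computational point: with the paper's normalization $[\O_{0}\otimes\rho_{j}]\mapsto -[\O_{E_j}(-1)]$, the image of the class in Part (1) has degree-two component $-D_{\mvec}$, not $+D_{\mvec}$, so your candidate should be attached to $-\mvec$.

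For comparison, the paper's proof avoids the derived category entirely at this step. The component of $\Hilb([\CC^{2}/G_{\Delta}])$ in question contributes the coefficient of $Q^{\half(\mvec|\mvec)}q_{1}^{m_1}\cdots q_{n}^{m_n}$ in Theorem~\ref{thm: Zorbifold formula}, which is visibly $1$, so its Euler characteristic is $1$; it is then identified with a Nakajima quiver variety $M(\vvec,\wvec)$ with $\vvec=\half(\mvec|\mvec)\deltavec+\muvec$, and the dimension formula $2\vvec\cdot\wvec-\langle\vvec,\vvec\rangle$ of \cite{Nakajima1994Duke} together with $\langle\deltavec,\deltavec\rangle=\langle\muvec,\deltavec\rangle=0$ gives dimension $0$. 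A zero-dimensional variety of Euler characteristic $1$ is a single point, which yields existence, uniqueness, and rigidity simultaneously and uniformly in the sign pattern of $\mvec$. If you wish to persist with the sheaf-theoretic route, you would essentially need the classification of rigid modules over the preprojective algebra, which is the quiver-variety statement in disguise; as written, your argument does not close either the existence or the uniqueness half.
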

\begin{proof}
	Part (2) is implied by Part (1) since we can take the union of the rigid subschemes supported at the orbifold points $p_1,\dots,p_r \in \mathcal{Z}$. So we need only prove the local case.
	
To prove Part (1) we need to show that component of $\Hilb
([\CC^{2}/G_{\Delta}])$ corresponding to substacks with $K$-theory
class
\[
\half (\mvec|\mvec)[\O_{p}] + \sum_{j=1}^{n}
m_{j}[\O_{0}\otimes \rho_{j}]
\]
is a single isolated point. This component corresponds to the
coefficient of 
\[
Q^{\half(\mvec |\mvec )}\cdot  q_{1}^{m_{1}}\dotsb q_{n}^{m_{n}}
\]
in Theorem~\ref{thm: Zorbifold formula}. It follows immediately from
the formula in Theorem~\ref{thm: Zorbifold formula} that this
coefficient is 1, and thus to prove this component is a single point,
we need only prove that it has dimension 0.

By Equation~\eqref{eqn: Op
= O0timesrhoreg}, we have
\[
\half (\mvec|\mvec)[\O_{p}] + \sum_{j=1}^{n}
m_{j}[\O_{0}\otimes \rho_{j}] = 
\half (\mvec|\mvec)[\O_{0}\otimes \rho_{0}] + \sum_{j=1}^{n}
\left(\half (\mvec |\mvec )d_{j}+m_{j} \right)[\O_{0}\otimes \rho_{j}]
\]
and so the component in question is 
\[
\Hilb^{v_{0}\rho_{0}+\dotsb +v_{n}\rho_{n}}([\CC^{2} /G_{\Delta}])
\]
where
\[
\vvec =(v_{0},v_{1},\dotsc ,v_{n}) = \left(\half (\mvec |\mvec ),\half (\mvec |\mvec )d_{1}+m_{1},\dotsc ,\half (\mvec |\mvec )d_{n}+m_{n} \right).
\]
We define
\[
\deltavec = (1,d_{1},\dotsc ,d_{n})\text{ and }
\muvec = (0,m_{1},\dotsc ,m_{n})
\]
so that our $\vvec$ of interest may be written
\[
\vvec =\half (\mvec |\mvec )\deltavec +\muvec .
\]
Nakajima has shown \cite[\S~2]{nakajima2002geometric} that 
\[
\Hilb^{v_{0}\rho_{0}+\dotsb +v_{n}\rho_{n}}([\CC^{2} /G_{\Delta}]) = M(\vvec ,\wvec )
\]
where $\wvec = (1,0,\dotsc ,0)$ and $M(\vvec ,\wvec )$ is the Nakajima
quiver variety associated to the affine Dynkin diagram of $\Delta$
with framing vector $\wvec$ and dimension vector $\vvec$. By
\cite[Equation~(2.6)]{Nakajima1994Duke} we have 
\begin{equation}\label{eqn: dimension formula for quiver varieties}
\dim M(\vvec ,\wvec ) = 2\vvec \cdot \wvec -\left\langle \vvec ,\vvec \right\rangle
= 2v_{0} - \left\langle \vvec ,\vvec \right\rangle
=(\mvec |\mvec ) - \left\langle \vvec ,\vvec\right\rangle
\end{equation}
where $\left\langle \cdot , \cdot  \right\rangle$ is the inner product
given by the Cartan matrix associated to the affine Dynkin diagram.

We also have
\[
\left\langle \muvec ,\muvec \right\rangle = (\mvec |\mvec ),\quad
 \left\langle \deltavec ,\deltavec \right\rangle = 0, \quad 
\text{and }\ \left\langle \muvec ,\deltavec \right\rangle = 0.
\]
The first follows directly from our definitions, and the later two are
well known properties of the vector $\deltavec$.  Using the above we
compute the dimension of the Hilbert scheme of interest:
\begin{align*}
\dim M(\vvec ,\wvec ) &=
(\mvec| \mvec ) - \left\langle
\half ( \mvec| \mvec )\deltavec+\muvec ,\half ( \mvec| \mvec )\deltavec+\muvec  \right\rangle \\
& = (\mvec| \mvec )  -\frac{1}{4} ( \mvec|
\mvec)\langle \deltavec, \deltavec \rangle  - ( \mvec|
\mvec) \langle \muvec, \deltavec \rangle  - \langle \muvec, \muvec \rangle\\
& = 0.
\end{align*}
We thus can conclude that $\Hilb^{v_{0}\rho_{0}+\dotsb
+v_{n}\rho_{n}}([\CC^{2} /G_{\Delta}]) = M(\vvec ,\wvec )$ is a single
point which finishes the proof of the lemma.
\end{proof}

\begin{proof}[Proof of Proposition \ref{prop: Hilb(Z) is birational to
Hilb(Y)}] Let $U= \Zcal \setminus \{p_{1},\dotsc ,p_{r} \}$ be the
Zariski open part with trivial stabilizers. Let $V \subset Y$ be the complement of the
exceptional divisors.  Let furthermore $Z_{\mdata} \subset
\mathcal{Z}$ be the rigid substack corresponding to the $K$-theory
datum $\mdata$ provided by Lemma \ref{lem:rigidsh}.  The Zariski open
substack of $\mathrm{Hilb}^{n,\mdata}(\Zcal)$ parameterizing substacks
of $\Zcal$ of the form $P \cup Z_{\mdata}$ where $P$ is a colength
$n+\half D_{\mdata}^{2}$ subscheme of $U$ is isomorphic to
$\mathrm{Hilb}^{n+\half D_{\mdata}^{2}}(U)$. This is because
$Z_{\mdata}$ was rigid and it had the $K$-theory class
\[-\half D_{\mdata}^{2}[\O_{p}] + \sum_{j=1}^{n} m_{j}(i)[\O_{p_i}\otimes\rho_{j}(i)].\]
On the other hand, the Zariski open subset of
$\mathrm{Hilb}^{n+\half D_{\mdata}^{2}}(Y)$ parameterizing subschemes
supported on $V\subset Y$ is isomorphic to $\mathrm{Hilb}^{n+\half
D_{\mdata}^{2}}(V)$. Finally, $\mathrm{Hilb}^{n+\half
D_{\mdata}^{2}}(U)\cong \mathrm{Hilb}^{n+\half D_{\mdata}^{2}}(V)$
since $U$ and $V$ are canonically isomorphic.
\end{proof}

With Proposition~\ref{prop: Hilb(Z) is birational to Hilb(Y)}, we can
now prove Theorem~\ref{thm: Formula for Zbir}.  Using the
identification

\[
\Hilb(X)^{G} \cong  \Hilb (\Zcal )
\]
and identifying
discrete parameters we get
\[
Z^{\bir}_{X,G}(q) = \sum_{a=0}^{\infty} \left[\Hilb^{a}(X)^{G}
\right]_{\bir} \, \, q^{a-1} 
 = \sum_{n,\mdata} \left[\Hilb^{n,\mdata}(\Zcal ) \right]_{\bir } \,  q^{D(n,\mdata )-1}
\]
where (recalling that $d_{j}(i) =\dim \rho_{j}(i)$),
\[
D(n,\mdata ) = kn +\sum_{i=1}^{r}\frac{k}{k_{i}}\sum_{j=1}^{n(i)}
m_{j}(i) d_{j}(i).
\]

\begin{comment}
We can organize the data $\mdata = \left\{m_{j}(i) \right\}$ into
$\mvec (i)\in M_{\Delta (i)}$, \emph{i.e.}  the vectors in the root lattice
of $\Delta (i)$ having components $ m_{1}(i),\dotsc
,m_{n(i)}(i)$. Under this identification, we see that
\[
D_{\mdata}^{2} = -\sum_{i=1}^{r} \left(\mvec (i)|\mvec (i)
\right)_{\Delta (i)} 
\]
since the intersection form of the exceptional curves over $p_{i}$ is
the negative of the corresponding Cartan matrix $C_{\Delta (i)}$. 
\end{comment}
Let
\[
d=n+\half D_{\mdata}^{2} = n-\half \sum_{i=1}^{r} \left(\mvec (i)|\mvec (i)
\right)_{\Delta (i)} .
\]
Then
\[
Z^{\bir}_{X,G}(q) = \sum_{d=0}\left[\Hilb^{d}(Y) \right]_{\bir}
\prod_{i=1}^{r}\sum_{\mvec (i)\in M_{\Delta (i)}} q^{D(n,\mdata )-1} 
\]
with
\begin{align*}
D(n,\mdata )-1& = -1 + k\left(d+\half \sum_{i=1}^{r} \left(\mvec (i)|\mvec (i)
\right)_{\Delta (i)}  \right)
+\sum_{i=1}^{r}\frac{k}{k_{i}}\sum_{j=1}^{n(i)} m_{j}(i) d_{j}(i) \\
&=kd-1 +\frac{k}{2}\sum_{i=1}^{r} \left\{  \left(\mvec (i)|\mvec (i)
\right)_{\Delta (i)} +\frac{2}{k_{i}}  \left(\mvec (i)|\zetavec (i)
\right)_{\Delta (i)} \right\}
\end{align*}
where $\zetavec (i) \in M_{\Delta (i)}\otimes \QQ $ is as in
Section~\ref{sec: local partition functions}. 

Completing the square and using the formula
\[
\frac{1}{k_{i}^{2}}  \left(\zetavec (i)|\zetavec (i)
\right)_{\Delta (i)} = \frac{2}{k_{i}}\left(\frac{k_{i}(n(i)+1)-1}{24} \right),
\]
which follows from Lemma~\ref{lem: another strange formula}, we get 
\[
D(n,\mdata )-1 = kd-1  - \sum_{i=1}^{r}
\frac{k}{k_{i}}\left(\frac{k_{i}(n(i)+1)-1}{24} \right)
+\frac{k}{2}\sum_{i=1}^{r} \left(\mvec (i)+\frac{1}{k_{i}}\zetavec
(i)\,\,\Big{|}\,\,\mvec (i)+\frac{1}{k_{i}}\zetavec (i) \right)_{\Delta (i)} . 
\]
It then follows that
\[
Z^{\bir}_{X,G}(q) = q^{A} \sum_{d=0}^{\infty} \left[\Hilb^{d}(Y)
\right]_{\bir} \,\, q^{kd-k} \,\, \prod_{i=1}^{r} \sum_{\mvec (i)\in
M_{\Delta (i)}} q^{\frac{k}{2} \left(\mvec (i)+\frac{1}{k_{i}}\zetavec
(i)\,\,\Big{|}\,\,\mvec (i)+\frac{1}{k_{i}}\zetavec (i) \right)_{\Delta (i)} } 
\]
where
\[
A = k-1 -\frac{k}{24} \sum_{i=1}^{r} \left(n(i) + 1 - \frac{1}{k_{i}}
\right). 
\]
Since 
\begin{align*}
24 = e(Y) = e\left(X/G - \{p_{1},\dotsc ,p_{r} \} \right) +
\sum_{i=1}^{r} (n(i)+1)
&=\frac{1}{k} \left(24 - \sum_{i=1}^{r} \frac{k}{k_{i}} \right)
+\sum_{i=1}^{r} (n(i)+1)\\
&= \frac{24}{k} + \sum_{i=1}^{r} \left(n(i)+1 -\frac{1}{k_{i}} \right),
\end{align*}
we see that $A=0$.

Thus we have
\[
Z^{\bir}_{X,G} (q) = Z^{\bir}_{Y}(q^{k}) \prod_{i=1}^{r}
\theta_{\Delta (i)} \left(\frac{k\tau}{k_{i}} \right) 
= Z^{\bir}_{Y}(q^{k}) \prod_{i=1}^{r}
Z_{\Delta (i)} \left(\frac{k\tau}{k_{i}} \right)\eta (k\tau )^{n(i)+1} 
=Z_{Y}^{\bir} (q^{k})\cdot  \eta (k\tau )^{B}\cdot  Z_{X,G}(q) 
\]
where we used Theorem~\ref{thm: eta product for theta function},
Theorem~\ref{thm: eta product formula for Z}, and we set
\[
B=\frac{24}{k} + \sum_{i=1}^{r} \left(n(i)+1 -\frac{1}{k_{i}}
\right). 
\]
The previous equation which showed that $A=0$ also shows that
$B=24$. Then since $\Delta (\tau ) = \eta (\tau )^{24}$, we see that
Theorem~\ref{thm: Formula for Zbir} follows. \qed

\appendix
\renewcommand{\thesection}{\Alph{section}}
\setcounter{section}{0}

\section*{Appendix~A.\hspace{0.5em}Another Strange Formula}\label{sec: another strange
formula}
\addcontentsline{toc}{section}{Appendix~A.\hspace{0.5em}Another Strange Formula}
\refstepcounter{section}

We recall the notation from Section~\ref{sec: local partition
functions}. Let $\Delta$ be an ADE root system of rank $n$. Let
$\alpha_{1},\dotsc \alpha_{n}$ be a system of positive simple roots
and let
\[
\dvec  = \sum_{i=1}^{n} d_{i} \alpha_{i}
\]
be the largest root. Let $(\cdot |\cdot )$ be the Weyl invariant
bilinear form with $(\alpha_{i}|\alpha_{i})=2$ and let $\zetavec$ be
the dual vector to $\dvec$ in the sense that
\begin{equation}\label{eqn: d = sum (zeta|ai)ai}
\sum_{i=1}^{n} (\zetavec |\alpha_{i}) \alpha_{i} = \dvec .
\end{equation}
Let 
\begin{equation}\label{eqn: k=1+(zeta|d)}
k=1+\sum_{i=1}^{n}d_{i}^{2} =  1+(\zetavec |\dvec ). 
\end{equation}

The identity of the following lemma coincides with Freudenthal and de Vries's
``strange formula'' when $\Delta$ is $A_{n}$. 
\begin{lemma}\label{lem: another strange formula}
Let $k$, $n$, and $\zetavec$ be as above. Then,
\[
\frac{k(n+1)-1}{24} =\frac{(\zetavec |\zetavec )}{2k} . 
\]
\end{lemma}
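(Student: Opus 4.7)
The plan is to prove the equivalent identity $(\zetavec|\zetavec) = k\bigl(k(n+1)-1\bigr)/12$ by reducing to a finite case check across the ADE classification. A useful preliminary observation is that $\zetavec = \sum_{i=1}^n d_i\,\omega_i$, where $\omega_1,\dotsc,\omega_n$ are the fundamental weights dual to the simple roots; this follows at once from $(\omega_i|\alpha_j) = \delta_{ij}$ together with the defining relation $(\zetavec|\alpha_j) = d_j$ (which is a direct consequence of~\eqref{eqn: d = sum (zeta|ai)ai}). Using the standard identity $(\omega_i|\omega_j) = (C_\Delta^{-1})_{ij}$ this gives $(\zetavec|\zetavec) = \dvec^{\top} \cdot C_\Delta^{-1} \cdot \dvec$, which puts the claim in purely linear-algebraic form.

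For type $A_n$, all marks $d_i$ equal $1$, so $\zetavec$ coincides with the Weyl vector $\rho$ and the identity reduces directly to the strange formula of Freudenthal--de Vries, via the classical closed form $(\rho|\rho) = n(n+1)(n+2)/12$ together with $k=n+1$.

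For type $D_n$ with $n\geq 4$, I would solve the linear system $C_{D_n}\,\vec{z} = \dvec$ with $\dvec = (1,2,\dotsc,2,1,1)$ explicitly. Along the linear spine of the Dynkin diagram the equation is a second-order recurrence $-z_{i-1} + 2z_i - z_{i+1} = 2$, whose general solution is quadratic in the index; the three boundary conditions (one at the terminal node and two at the trivalent vertex) pin it down uniquely to $z_i = -i^2 + (2n-2)i - 1$ along the spine and $z_{n-1} = z_n = n(n-2)/2$ on the fork. Summing $(\zetavec|\zetavec) = \dvec \cdot \vec{z}$ then yields $(n-2)(4n^2-4n-9)/3$, which matches $k\bigl(k(n+1)-1\bigr)/12$ on using $k = 4(n-2)$ and $k(n+1)-1 = 4n^2 - 4n - 9$.

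For the exceptional types $E_6, E_7, E_8$ (with $k = 24, 48, 120$ respectively), the computation of $\vec{z} = C_\Delta^{-1}\,\dvec$ is simply the inversion of a small explicit matrix, after which one verifies that $(\zetavec|\zetavec) = \dvec\cdot\vec{z}$ equals $334, 1532, 10790$ respectively. The main obstacle is really only bookkeeping, especially in the $D_n$ family where the boundary conditions at the trivalent vertex need careful treatment; a uniform conceptual proof---perhaps leveraging the McKay identification $d_i = \dim \rho_i$ via a character sum over $G_\Delta$, or some affine Lie algebra interpretation of $\zetavec$---would be preferable but is not obviously available.
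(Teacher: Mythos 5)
Your proposal is correct and follows essentially the same route as the paper: a case-by-case check over the ADE classification, with $A_{n}$ reduced to the Freudenthal--de Vries strange formula, $D_{n}$ handled by explicit computation of $(\zetavec|\zetavec)$, and $E_{6},E_{7},E_{8}$ verified individually. The only (cosmetic) difference is in the $D_{n}$ case, where the paper computes $\zetavec=\sum_i d_i\omega_i$ in the standard orthonormal realization of the root system rather than inverting the Cartan matrix via a recurrence; your value $(n-2)(4n^{2}-4n-9)/3$ agrees with the paper's $\tfrac{4}{3}n^{3}-4n^{2}-\tfrac{1}{3}n+6$, and your exceptional values $334$, $1532$, $10790$ are the correct ones.
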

\begin{proof}
\leavevmode

\textbf{The case of $\Delta =A_{n}$.---}
For any ADE root system we have $(\rhovec |\alpha )=1$ for all positive
roots where $\rhovec =\half \sum_{\alpha \in R^{+}}\alpha$ is half the
sum of the positive roots. Since for $A_{n}$, $d_{i}=1$, it follows
from Equation~\eqref{eqn: d = sum (zeta|ai)ai} that $\zetavec
=\rhovec$, and it follows from Equation~\eqref{eqn: k=1+(zeta|d)}
that $k=n+1=h$ is the Coxeter number. The lemma is then
\[
\frac{(n+1)^{2}-1}{24} = \frac{(\rhovec |\rhovec )}{2h}.
\]
Since the Lie algebra associated to $A_{n}$, namely $\mathfrak{sl}_{n+1}$,
has dimension $(n+1)^{2}-1$ and the Killing form satisfies $\kappa
(\cdot ,\cdot )=\frac{1}{2h}(\cdot |\cdot )$, the lemma may be
rewritten as
\[
\frac{\dim \mathfrak{sl}_{n+1}}{24} =\kappa (\rhovec ,\rhovec )
\]
which is Freudenthal and de Vries's ``Strange Formula''
\cite[\S~47.11]{freudenthal1969linear}.

\bigskip
\textbf{The case of $\Delta =D_{n}$.---} Let $e_{1},\dotsc ,e_{n}$ be the
standard orthonormal basis of $\RR^{n}$. Then the collection $\{\pm
e_{i} \pm e_{j}, i<j \}$ is a $D_{n}$ root system and we may take 
\[
\alpha_{i} = \begin{cases}
e_{i}-e_{i+1},\\
e_{n-1}+e_{n},
\end{cases} \text{if}\quad\begin{array}{l}
i=1,\dotsc ,n-1\\ i=n
\end{array}
\]
as a system of simple positive roots. Then the fundamental weights
$\omega_{i}$, which are defined by the condition $(\omega_{i}
|\alpha_{j}) = \delta_{ij} $, are given by \cite[Appendix C]{Knapp}
\[
\omega_{i} = \begin{cases}
e_{1}+\dotsb +e_{i},\\
\half (e_{1}+\dotsb +e_{n-1}-e_{n}),\\
\half (e_{1}+\dotsb +e_{n-1}+e_{n}),\\
\end{cases}\ \text{if}\quad\begin{array}{l}
i\leq n-2\\  i=n-1\\ i=n.
\end{array}
\]
Then since
\[
d_{i}=\begin{cases}
1 \\ 2
\end{cases}\text{if}\quad\begin{array}{l}i=1,n-1,n,\\
i=2,\dotsc , n-2,
\end{array} 
\]
we have
\[
\zetavec  = \omega_{1}+2\omega_{2}+2\omega_{3}+\dotsb +2\omega_{n-2}+\omega_{n-1}+\omega_{n}
= 2(n-2) e_{1} + \sum_{i=2}^{n-2}\left(2(n-1-i)+1 \right)\, e_{i}  +e_{n-1}
\]
and so
\[
(\zetavec |\zetavec ) = 4(n-2)^{2} + 1 + \sum_{i=2}^{n-2} \left(2(n-1-i)+1 \right)^{2}
= \frac{4}{3}n^{3}-4n^{2}-\frac{1}{3}n+6.
\]
Finally since $k=1+\sum_{i=1}^{n}d_{i} = 4(n-2)$ the lemma becomes
\[
\frac{4(n-2)(n+1)-1}{24} = \frac{ \frac{4}{3}n^{3}-4n^{2}-\frac{1}{3}n+6}{8(n-2)}
\]
which is readily verified.

\bigskip
\textbf{The case of $\Delta =E_{6},E_{7},E_{8}$.---} These three
individual cases are easily checked one by one.
\end{proof}

%\clearpage

\section*{Apppendix~B.\hspace{0.5em}Table of eta products}\label{app:tableeta}
\addcontentsline{toc}{section}{Appendix~B.\hspace{0.5em}Table of eta products}
\refstepcounter{section}
\renewcommand{\arraystretch}{1.5}

The following table provides the list of the modular forms
$Z_{X,G}^{-1}$, expressed as eta products, for each of the 82 possible
symplectic actions of a group $G$ on a $K3$ surface $X$. Our numbering
matches Xiao's \cite{xiao1996galois} whose table we refer to for a
description of each group.
\begin{longtable}{|l|l|l|l|l|}
  \hline
$\# $ & $|G|$ & Singularities of $X/G$&  The modular form $Z_{X,G}^{-1}$ & Weight \\ 
  \hline
0 & 1 &  & $ \eta \left( \tau \right)   ^{24}$ & 12 \\ 
  1 & 2 & $8 A_{1}$ & $ \eta \left( 2\tau \right)   ^{8}  \eta \left( \tau \right)   ^{8}$ & 8 \\ 
  2 & 3 & $6 A_{2}$ & $ \eta \left( 3\tau \right)   ^{6}  \eta \left( \tau \right)   ^{6}$ & 6 \\ 
  3 & 4 & $12 A_{1}$ & $ \eta \left( 2\tau \right)   ^{12}$ & 6 \\ 
  4 & 4 & $2 A_{1} + 4 A_{3}$ & $ \eta \left( 4\tau \right)   ^{4}  \eta \left( 2\tau \right)   ^{2}  \eta \left( \tau \right)   ^{4}$ & 5 \\ 
  5 & 5 & $4 A_{4}$ & $ \eta \left( 5\tau \right)   ^{4}  \eta \left( \tau \right)   ^{4}$ & 4 \\ 
  6 & 6 & $8 A_{1} + 3 A_{2}$ & ${\frac {  \eta \left( 3\tau \right)   ^{8}  \eta \left( 2\tau \right)   ^{3}}{\eta \left( 6\tau \right) }}$ & 5 \\ 
  7 & 6 & $2 A_{1} + 2 A_{2} + 2 A_{5}$ & $ \eta \left( 6\tau \right)   ^{2}  \eta \left( 3\tau \right)   ^{2}  \eta \left( 2\tau \right)   ^{2} \mbox{}  \eta \left( \tau \right)   ^{2}$ & 4 \\ 
  8 & 7 & $3 A_{6}$ & $ \eta \left( 7\tau \right)   ^{3}  \eta \left( \tau \right)   ^{3}$ & 3 \\ 
  9 & 8 & $14 A_{1}$ & ${\frac {  \eta \left( 4\tau \right)   ^{14}}{  \eta \left( 8\tau \right)   ^{4}}}$ & 5 \\ 
  10 & 8 & $9 A_{1} + 2 A_{3}$ & ${\frac {  \eta \left( 4\tau \right)   ^{9}  \eta \left( 2\tau \right)   ^{2}}{  \eta \left( 8\tau \right)   ^{2}}}$ & 9/2 \\ 
  11 & 8 & $4 A_{1} + 4 A_{3}$ & $ \eta \left( 4\tau \right)   ^{4}  \eta \left( 2\tau \right)   ^{4}$ & 4 \\ 
  12 & 8 & $3 A_{3} + 2 D_{4}$ & ${\frac {  \eta \left( \tau \right)   ^{2}  \eta \left( 4\tau \right)   ^{6}}{\eta \left( 2\tau \right) }}$ & 7/2 \\ 
  13 & 8 & $ A_{1} + 4 D_{4}$ & ${\frac {  \eta \left( 4\tau \right)   ^{13}  \eta \left( \tau \right)   ^{4}}{  \eta \left( 8\tau \right)   ^{2} \mbox{}  \eta \left( 2\tau \right)   ^{8}}}$ & 7/2 \\ 
  14 & 8 & $ A_{1} +  A_{3} + 2 A_{7}$ & $ \eta \left( 8\tau \right)   ^{2}\eta \left( 4\tau \right) \eta \left( 2\tau \right)   \eta \left( \tau \right)   ^{2}$ & 3 \\ 
  15 & 9 & $8 A_{2}$ & $ \eta \left( 3\tau \right)   ^{8}$ & 4 \\ 
  16 & 10 & $8 A_{1} + 2 A_{4}$ & ${\frac {  \eta \left( 5\tau \right)   ^{8}  \eta \left( 2\tau \right)   ^{2}}{  \eta \left( 10\tau \right)   ^{2}}}$ & 4 \\ 
  17 & 12 & $4 A_{1} + 6 A_{2}$ & ${\frac {  \eta \left( 6\tau \right)   ^{4}  \eta \left( 4\tau \right)   ^{6}}{  \eta \left( 12\tau \right)   ^{2}}}$ & 4 \\ 
  18 & 12 & $9 A_{1} +  A_{2} +  A_{5}$ & ${\frac {  \eta \left( 6\tau \right)   ^{9}\eta \left( 4\tau \right) \eta \left( 2\tau \right) }{  \eta \left( 12\tau \right)   ^{3}}}$ & 4 \\ 
  19 & 12 & $3 A_{1} + 3 A_{5}$ & $ \eta \left( 6\tau \right)   ^{3}  \eta \left( 2\tau \right)   ^{3}$ & 3 \\ 
  20 & 12 & $ A_{2} + 2 A_{3} + 2 D_{5}$ & ${\frac {  \eta \left( 4\tau \right)   ^{3}  \eta \left( 3\tau \right)   ^{2}  \eta \left( \tau \right)   ^{2} \mbox{}  \eta \left( 6\tau \right)   ^{4}}{\eta \left( 12\tau \right)   \eta \left( 2\tau \right)   ^{4}}}$ & 3 \\ 
  21 & 16 & $15 A_{1}$ & ${\frac {  \eta \left( 8\tau \right)   ^{15}}{  \eta \left( 16\tau \right)   ^{6}}}$ & 9/2 \\ 
  22 & 16 & $10 A_{1} + 2 A_{3}$ & ${\frac {  \eta \left( 8\tau \right)   ^{10}  \eta \left( 4\tau \right)   ^{2}}{  \eta \left( 16\tau \right)   ^{4}}}$ & 4 \\ 
  23 & 16 & $5 A_{1} + 4 A_{3}$ & ${\frac {  \eta \left( 8\tau \right)   ^{5}  \eta \left( 4\tau \right)   ^{4}}{  \eta \left( 16\tau \right)   ^{2}}}$ & 7/2 \\ 
  24 & 16 & $6 A_{1} +  A_{3} + 2 D_{4}$ & ${\frac {  \eta \left( 8\tau \right)   ^{12}  \eta \left( 2\tau \right)   ^{2}}{  \eta \left( 16\tau \right)   ^{4} \mbox{}  \eta \left( 4\tau \right)   ^{3}}}$ & 7/2 \\ 
  25 & 16 & $6 A_{3}$ & $ \eta \left( 4\tau \right)   ^{6}$ & 3 \\ 
  26 & 16 & $4 A_{1} +  A_{3} +  A_{7} +  D_{4}$ & ${\frac {  \eta \left( 8\tau \right)   ^{7}  \eta \left( 2\tau \right)   ^{2}}{  \eta \left( 16\tau \right)   ^{2} \mbox{}\eta \left( 4\tau \right) }}$ & 3 \\ 
  27 & 16 & $2 A_{1} + 4 D_{4}$ & ${\frac {  \eta \left( 8\tau \right)   ^{14}  \eta \left( 2\tau \right)   ^{4}}{  \eta \left( 4\tau \right)   ^{8} \mbox{}  \eta \left( 16\tau \right)   ^{4}}}$ & 3 \\ 
  28 & 16 & $2 A_{1} +  A_{3} + 2 A_{7}$ & $ \eta \left( 8\tau \right)   ^{2}\eta \left( 4\tau \right)   \eta \left( 2\tau \right)   ^{2}$ & 5/2 \\ 
  29 & 16 & $ A_{3} +  D_{4} + 2 D_{6}$ & ${\frac {\eta \left( 4\tau \right)   \eta \left( 8\tau \right)   ^{7}  \eta \left( \tau \right)   ^{2}}{  \eta \left( 16\tau \right)   ^{2} \mbox{}  \eta \left( 2\tau \right)   ^{3}}}$ & 5/2 \\ 
  30 & 18 & $8 A_{1} + 4 A_{2}$ & ${\frac {  \eta \left( 9\tau \right)   ^{8}  \eta \left( 6\tau \right)   ^{4}}{  \eta \left( 18\tau \right)   ^{4}}}$ & 4 \\ 
  31 & 18 & $2 A_{1} + 3 A_{2} + 2 A_{5}$ & ${\frac {  \eta \left( 9\tau \right)   ^{2}  \eta \left( 6\tau \right)   ^{3}  \eta \left( 3\tau \right)   ^{2} \mbox{}}{\eta \left( 18\tau \right) }}$ & 3 \\ 
  32 & 20 & $2 A_{1} + 4 A_{3} +  A_{4}$ & ${\frac {  \eta \left( 10\tau \right)   ^{2}  \eta \left( 5\tau \right)   ^{4}\eta \left( 4\tau \right) }{\eta \left( 20 \mbox{}\tau \right) }}$ & 3 \\ 
  33 & 21 & $6 A_{2} +  A_{6}$ & ${\frac {  \eta \left( 7\tau \right)   ^{6}\eta \left( 3\tau \right) }{\eta \left( 21\tau \right) }}$ & 3 \\ 
  34 & 24 & $5 A_{1} + 3 A_{2} + 2 A_{3}$ & ${\frac {  \eta \left( 12\tau \right)   ^{5}  \eta \left( 8\tau \right)   ^{3}  \eta \left( 6\tau \right)   ^{2} \mbox{}}{  \eta \left( 24\tau \right)   ^{3}}}$ & 7/2 \\ 
  35 & 24 & $4 A_{1} + 2 A_{2} + 2 A_{5}$ & ${\frac {  \eta \left( 12\tau \right)   ^{4}  \eta \left( 8\tau \right)   ^{2}  \eta \left( 4\tau \right)   ^{2} \mbox{}}{  \eta \left( 24\tau \right)   ^{2}}}$ & 3 \\ 
  36 & 24 & $5 A_{1} +  A_{3} +  A_{5} +  D_{5}$ & ${\frac {  \eta \left( 12\tau \right)   ^{7}\eta \left( 6\tau \right) \eta \left( 2\tau \right) \eta \left( 8\tau \right) }{  \eta \left( 24\tau \right)   ^{3} \mbox{}\eta \left( 4\tau \right) }}$ & 3 \\ 
  37 & 24 & $2 A_{2} +  A_{5} +  D_{4} +  E_{6}$ & ${\frac {  \eta \left( 8\tau \right)   ^{4}\eta \left( 4\tau \right) \eta \left( 3\tau \right)   \eta \left( 12\tau \right)   ^{4} \mbox{}\eta \left( \tau \right) }{  \eta \left( 6\tau \right)   ^{2}  \eta \left( 24\tau \right)   ^{2}  \eta \left( 2\tau \right)   ^{2}}}$ & 5/2 \\ 
  38 & 24 & $2 A_{2} +  A_{3} + 2 E_{6}$ & ${\frac {  \eta \left( 8\tau \right)   ^{6}\eta \left( 6\tau \right)   \eta \left( \tau \right)   ^{2}  \eta \left( 12\tau \right)   ^{2} \mbox{}}{  \eta \left( 2\tau \right)   ^{4}  \eta \left( 24\tau \right)   ^{2}}}$ & 5/2 \\ 
  39 & 32 & $8 A_{1} + 3 A_{3}$ & ${\frac {  \eta \left( 16\tau \right)   ^{8}  \eta \left( 8\tau \right)   ^{3}}{  \eta \left( 32\tau \right)   ^{4}}}$ & 7/2 \\ 
  40 & 32 & $9 A_{1} + 2 D_{4}$ & ${\frac {  \eta \left( 16\tau \right)   ^{15}  \eta \left( 4\tau \right)   ^{2}}{  \eta \left( 32\tau \right)   ^{6} \mbox{}  \eta \left( 8\tau \right)   ^{4}}}$ & 7/2 \\ 
  41 & 32 & $3 A_{1} + 5 A_{3}$ & ${\frac {  \eta \left( 16\tau \right)   ^{3}  \eta \left( 8\tau \right)   ^{5}}{  \eta \left( 32\tau \right)   ^{2}}}$ & 3 \\ 
  42 & 32 & $4 A_{1} + 2 A_{3} + 2 D_{4}$ & ${\frac {  \eta \left( 16\tau \right)   ^{10}  \eta \left( 4\tau \right)   ^{2}}{  \eta \left( 32\tau \right)   ^{4} \mbox{}  \eta \left( 8\tau \right)   ^{2}}}$ & 3 \\ 
  43 & 32 & $5 A_{1} + 2 A_{7}$ & ${\frac {  \eta \left( 16\tau \right)   ^{5}  \eta \left( 4\tau \right)   ^{2}}{  \eta \left( 32\tau \right)   ^{2}}}$ & 5/2 \\ 
  44 & 32 & $2 A_{1} + 2 A_{3} +  A_{7} +  D_{4}$ & ${\frac {  \eta \left( 16\tau \right)   ^{5}  \eta \left( 4\tau \right)   ^{2}}{  \eta \left( 32\tau \right)   ^{2}}}$ & 5/2 \\ 
  45 & 32 & $3 A_{1} +  D_{4} + 2 D_{6}$ & ${\frac {  \eta \left( 16\tau \right)   ^{10}  \eta \left( 2\tau \right)   ^{2}}{  \eta \left( 32\tau \right)   ^{4} \mbox{}  \eta \left( 4\tau \right)   ^{3}}}$ & 5/2 \\ 
  46 & 36 & $2 A_{1} + 2 A_{2} + 4 A_{3}$ & ${\frac {  \eta \left( 18\tau \right)   ^{2}  \eta \left( 12\tau \right)   ^{2}  \eta \left( 9\tau \right)   ^{4} \mbox{}}{  \eta \left( 36\tau \right)   ^{2}}}$ & 3 \\ 
  47 & 36 & $ A_{1} + 6 A_{2} +  A_{5}$ & ${\frac {\eta \left( 18\tau \right)   \eta \left( 12\tau \right)   ^{6}\eta \left( 6\tau \right) }{  \eta \left( 36 \mbox{}\tau \right)   ^{2}}}$ & 3 \\ 
  48 & 36 & $6 A_{1} +  A_{2} + 2 A_{5}$ & ${\frac {  \eta \left( 18\tau \right)   ^{6}\eta \left( 12\tau \right)   \eta \left( 6\tau \right)   ^{2}}{  \eta \left( 36 \mbox{}\tau \right)   ^{3}}}$ & 3 \\ 
  49 & 48 & $5 A_{1} + 6 A_{2}$ & ${\frac {  \eta \left( 24\tau \right)   ^{5}  \eta \left( 16\tau \right)   ^{6}}{  \eta \left( 48\tau \right)   ^{4}}}$ & 7/2 \\ 
  50 & 48 & $6 A_{2} + 2 A_{3}$ & ${\frac {  \eta \left( 16\tau \right)   ^{6}  \eta \left( 12\tau \right)   ^{2}}{  \eta \left( 48\tau \right)   ^{2}}}$ & 3 \\ 
  51 & 48 & $5 A_{1} +  A_{2} + 2 A_{3} +  A_{5}$ & ${\frac {  \eta \left( 24\tau \right)   ^{5}\eta \left( 16\tau \right)   \eta \left( 12\tau \right)   ^{2} \mbox{}\eta \left( 8\tau \right) }{  \eta \left( 48\tau \right)   ^{3}}}$ & 3 \\ 
  52 & 48 & $4 A_{1} + 3 A_{5}$ & ${\frac {  \eta \left( 24\tau \right)   ^{4}  \eta \left( 8\tau \right)   ^{3}}{  \eta \left( 48\tau \right)   ^{2}}}$ & 5/2 \\ 
  53 & 48 & $ A_{1} +  A_{2} + 2 A_{3} + 2 D_{5}$ & ${\frac {  \eta \left( 24\tau \right)   ^{5}  \eta \left( 16\tau \right)   ^{3}  \eta \left( 12\tau \right)   ^{2} \mbox{}  \eta \left( 4\tau \right)   ^{2}}{  \eta \left( 48\tau \right)   ^{3}  \eta \left( 8\tau \right)   ^{4}}}$ & 5/2 \\ 
  54 & 48 & $4 A_{1} +  A_{2} +  A_{7} +  E_{6}$ & ${\frac {  \eta \left( 24\tau \right)   ^{5}  \eta \left( 16\tau \right)   ^{3}\eta \left( 6\tau \right)  \mbox{}\eta \left( 2\tau \right) }{  \eta \left( 48\tau \right)   ^{3}  \eta \left( 4\tau \right)   ^{2}}}$ & 5/2 \\ 
  55 & 60 & $4 A_{1} + 3 A_{2} + 2 A_{4}$ & ${\frac {  \eta \left( 30\tau \right)   ^{4}  \eta \left( 20\tau \right)   ^{3}  \eta \left( 12\tau \right)   ^{2} \mbox{}}{  \eta \left( 60\tau \right)   ^{3}}}$ & 3 \\ 
  56 & 64 & $5 A_{1} + 3 A_{3} +  D_{4}$ & ${\frac {  \eta \left( 32\tau \right)   ^{8}\eta \left( 16\tau \right) \eta \left( 8\tau \right) }{  \eta \left( 64 \mbox{}\tau \right)   ^{4}}}$ & 3 \\ 
  57 & 64 & $6 A_{1} + 3 D_{4}$ & ${\frac {  \eta \left( 32\tau \right)   ^{15}  \eta \left( 8\tau \right)   ^{3}}{  \eta \left( 64\tau \right)   ^{6} \mbox{}  \eta \left( 16\tau \right)   ^{6}}}$ & 3 \\ 
  58 & 64 & $3 A_{1} + 3 A_{3} +  A_{7}$ & ${\frac {  \eta \left( 32\tau \right)   ^{3}  \eta \left( 16\tau \right)   ^{3}\eta \left( 8\tau \right)  \mbox{}}{  \eta \left( 64\tau \right)   ^{2}}}$ & 5/2 \\ 
  59 & 64 & $5 A_{3} +  D_{4}$ & ${\frac {  \eta \left( 32\tau \right)   ^{3}  \eta \left( 16\tau \right)   ^{3}\eta \left( 8\tau \right)  \mbox{}}{  \eta \left( 64\tau \right)   ^{2}}}$ & 5/2 \\ 
  60 & 64 & $4 A_{1} +  A_{3} + 2 D_{6}$ & ${\frac {  \eta \left( 32\tau \right)   ^{8}  \eta \left( 16\tau \right)   ^{3}  \eta \left( 4\tau \right)   ^{2} \mbox{}}{  \eta \left( 64\tau \right)   ^{4}  \eta \left( 8\tau \right)   ^{4}}}$ & 5/2 \\ 
  61 & 72 & $4 A_{1} + 3 A_{2} +  A_{3} +  D_{5}$ & ${\frac {  \eta \left( 36\tau \right)   ^{6}  \eta \left( 24\tau \right)   ^{4}\eta \left( 18\tau \right)  \mbox{}\eta \left( 6\tau \right) }{  \eta \left( 72\tau \right)   ^{4}  \eta \left( 12\tau \right)   ^{2}}}$ & 3 \\ 
  62 & 72 & $3 A_{1} + 2 A_{3} + 2 A_{5}$ & ${\frac {  \eta \left( 36\tau \right)   ^{3}  \eta \left( 18\tau \right)   ^{2}  \eta \left( 12\tau \right)   ^{2} \mbox{}}{  \eta \left( 72\tau \right)   ^{2}}}$ & 5/2 \\ 
  63 & 72 & $ A_{2} + 3 A_{3} + 2 D_{4}$ & ${\frac {\eta \left( 24\tau \right)   \eta \left( 9\tau \right)   ^{2}  \eta \left( 36\tau \right)   ^{6}}{  \eta \left( 72\tau \right)   ^{3} \mbox{}\eta \left( 18\tau \right) }}$ & 5/2 \\ 
  64 & 80 & $3 A_{1} + 4 A_{4}$ & ${\frac {  \eta \left( 40\tau \right)   ^{3}  \eta \left( 16\tau \right)   ^{4}}{  \eta \left( 80\tau \right)   ^{2}}}$ & 5/2 \\ 
  65 & 96 & $3 A_{1} + 3 A_{2} + 3 A_{3}$ & ${\frac {  \eta \left( 48\tau \right)   ^{3}  \eta \left( 32\tau \right)   ^{3}  \eta \left( 24\tau \right)   ^{3} \mbox{}}{  \eta \left( 96\tau \right)   ^{3}}}$ & 3 \\ 
  66 & 96 & $2 A_{1} + 2 A_{2} +  A_{3} + 2 A_{5}$ & ${\frac {  \eta \left( 48\tau \right)   ^{2}  \eta \left( 32\tau \right)   ^{2}\eta \left( 24\tau \right)  \mbox{}  \eta \left( 16\tau \right)   ^{2}}{  \eta \left( 96\tau \right)   ^{2}}}$ & 5/2 \\ 
  67 & 96 & $2 A_{1} + 3 A_{2} +  A_{7} +  D_{4}$ & ${\frac {  \eta \left( 48\tau \right)   ^{5}  \eta \left( 32\tau \right)   ^{3}  \eta \left( 12\tau \right)   ^{2} \mbox{}}{  \eta \left( 96\tau \right)   ^{3}  \eta \left( 24\tau \right)   ^{2}}}$ & 5/2 \\ 
  68 & 96 & $3 A_{1} + 2 A_{3} +  A_{5} +  D_{5}$ & ${\frac {  \eta \left( 48\tau \right)   ^{5}  \eta \left( 24\tau \right)   ^{2}\eta \left( 8\tau \right)  \mbox{}\eta \left( 32\tau \right) }{  \eta \left( 96\tau \right)   ^{3}\eta \left( 16\tau \right) }}$ & 5/2 \\ 
  69 & 96 & $3 A_{1} + 2 A_{2} + 2 E_{6}$ & ${\frac {  \eta \left( 48\tau \right)   ^{5}  \eta \left( 32\tau \right)   ^{6}  \eta \left( 4\tau \right)   ^{2} \mbox{}}{  \eta \left( 96\tau \right)   ^{4}  \eta \left( 8\tau \right)   ^{4}}}$ & 5/2 \\ 
  70 & 120 & $2 A_{1} +  A_{2} + 2 A_{3} +  A_{4} +  A_{5}$ & ${\frac {  \eta \left( 60\tau \right)   ^{2}\eta \left( 40\tau \right)   \eta \left( 30\tau \right)   ^{2} \mbox{}\eta \left( 24\tau \right) \eta \left( 20\tau \right) }{  \eta \left( 120\tau \right)   ^{2}}}$ & 5/2 \\ 
  71 & 128 & $3 A_{1} + 2 A_{3} +  D_{4} +  D_{6}$ & ${\frac {  \eta \left( 64\tau \right)   ^{8}\eta \left( 32\tau \right) \eta \left( 8\tau \right) }{  \eta \left( 128\tau \right)   ^{4} \mbox{}\eta \left( 16\tau \right) }}$ & 5/2 \\ 
  72 & 144 & $ A_{1} + 4 A_{2} + 2 A_{5}$ & ${\frac {\eta \left( 72\tau \right)   \eta \left( 48\tau \right)   ^{4}  \eta \left( 24\tau \right)   ^{2}}{  \eta \left( 144 \mbox{}\tau \right)   ^{2}}}$ & 5/2 \\ 
  73 & 160 & $2 A_{1} + 3 A_{3} + 2 A_{4}$ & ${\frac {  \eta \left( 80\tau \right)   ^{2}  \eta \left( 40\tau \right)   ^{3}  \eta \left( 32\tau \right)   ^{2} \mbox{}}{  \eta \left( 160\tau \right)   ^{2}}}$ & 5/2 \\ 
  74 & 168 & $ A_{1} + 3 A_{2} + 2 A_{3} +  A_{6}$ & ${\frac {\eta \left( 84\tau \right)   \eta \left( 56\tau \right)   ^{3}  \eta \left( 42\tau \right)   ^{2} \mbox{}\eta \left( 24\tau \right) }{  \eta \left( 168\tau \right)   ^{2}}}$ & 5/2 \\ 
  75 & 192 & $2 A_{1} + 6 A_{2} +  D_{4}$ & ${\frac {  \eta \left( 96\tau \right)   ^{5}  \eta \left( 64\tau \right)   ^{6}\eta \left( 24\tau \right)  \mbox{}}{  \eta \left( 192\tau \right)   ^{4}  \eta \left( 48\tau \right)   ^{2}}}$ & 3 \\ 
  76 & 192 & $2 A_{1} +  A_{2} + 2 A_{3} +  A_{5} +  D_{4}$ & ${\frac {  \eta \left( 96\tau \right)   ^{5}\eta \left( 64\tau \right) \eta \left( 32\tau \right)  \mbox{}\eta \left( 24\tau \right) }{  \eta \left( 192\tau \right)   ^{3}}}$ & 5/2 \\ 
  77 & 192 & $2 A_{1} +  A_{2} + 3 A_{3} +  E_{6}$ & ${\frac {  \eta \left( 96\tau \right)   ^{3}  \eta \left( 64\tau \right)   ^{3}  \eta \left( 48\tau \right)   ^{3} \mbox{}\eta \left( 8\tau \right) }{  \eta \left( 192\tau \right)   ^{3}  \eta \left( 16\tau \right)   ^{2}}}$ & 5/2 \\ 
  78 & 288 & $2 A_{1} + 2 A_{2} +  A_{3} + 2 D_{5}$ & ${\frac {  \eta \left( 144\tau \right)   ^{6}  \eta \left( 96\tau \right)   ^{4} \mbox{}\eta \left( 72\tau \right)   \eta \left( 24\tau \right)   ^{2}}{  \eta \left( 288\tau \right)   ^{4}  \eta \left( 48\tau \right)   ^{4}}}$ & 5/2 \\ 
  79 & 360 & $ A_{1} + 2 A_{2} + 2 A_{3} + 2 A_{4}$ & ${\frac {\eta \left( 180\tau \right)   \eta \left( 120\tau \right)   ^{2}  \eta \left( 90\tau \right)   ^{2} \mbox{}  \eta \left( 72\tau \right)   ^{2}}{  \eta \left( 360\tau \right)   ^{2}}}$ & 5/2 \\ 
  80 & 384 & $ A_{1} + 3 A_{2} + 2 A_{3} +  D_{6}$ & ${\frac {  \eta \left( 192\tau \right)   ^{3}  \eta \left( 128\tau \right)   ^{3} \mbox{}  \eta \left( 96\tau \right)   ^{3}\eta \left( 24\tau \right) }{  \eta \left( 384\tau \right)   ^{3}  \eta \left( 48\tau \right)   ^{2}}}$ & 5/2 \\ 
  81 & 960 & $ A_{1} + 3 A_{2} + 2 A_{4} +  D_{4}$ & ${\frac {  \eta \left( 480\tau \right)   ^{4}  \eta \left( 320\tau \right)   ^{3} \mbox{}  \eta \left( 192\tau \right)   ^{2}\eta \left( 120\tau \right) }{  \eta \left( 960\tau \right)   ^{3}  \eta \left( 240\tau \right)   ^{2}}}$ & 5/2 \\ 
   \hline
\caption{Table of the modular forms $Z_{X,G}^{-1}$ for all symplectic
$G$ actions.} \label{table: list of eta products}
\end{longtable}

%%%%%%%%%%%%%%%%%%%%%
% References
%%%%%%%%%%%%%%%%%%%%%

\ifx\undefined\bysame
\newcommand{\bysame}{\leavevmode\hbox to3em{\hrulefill}\,}
\fi

\end{document}